\documentclass{amsart}
\usepackage{amsrefs}
\usepackage{amsthm}
\usepackage{graphicx,color}
\usepackage[english]{babel}  % language support
\usepackage[latin1]{inputenc}
\usepackage[mathscr]{eucal}
\usepackage{amssymb}
\usepackage[dvipsnames]{xcolor} 
\begin{document}

\title[Nonlocal Schr\"odinger equations]
{Nonlocal Schr\"odinger equations for Integro-Differential Operators with Measurable Kernels and Asymptotic Potentials.}

\author[Ronaldo C. Duarte]{Ronaldo C. Duarte}

\address{Ronaldo C. Duarte \newline Departamento de Matem\'atica,
         Universidade Federal de Cam\-pi\-na Grande,
         58429-010, Campina Grande - PB, Brazil}

\email{ronaldocesarduarte@gmail.com}

\address{Marco A. S. Souto \newline Departamento de Matem\'atica,
	Universidade Federal de Cam\-pi\-na Grande,
	58429-010, Campina Grande - PB, Brazil}

\email{marco.souto.pb@gmail.com}

\author[Marco A. S. Souto]{Marco A. S. Souto}

\keywords{integro-differential operator, nonlocal Schr\"odinger equation, asymptotic potential}

\subjclass{Primary 35J60; Secondary  35J10}

\begin{abstract}
In this paper, we investigate the existence of nonnegative solutions for the problem
$$
-\mathcal{L}_{K}u+V(x)u=f(u)
$$
in $\mathbb R^n$, where $-\mathcal{L}_{K}$ is a integro-differential operator with measurable kernel $K$ and $V$ is a continuous potential. Under apropriate hypothesis, we prove, using variational methods, that the above equation has solution.
\end{abstract}

\maketitle

\newtheorem{theorem}{Theorem}[section]
\newtheorem{lemma}[theorem]{Lemma}
\newtheorem{proposition}[theorem]{Proposition}
\newtheorem{corollary}[theorem]{Corollary}
\newtheorem{remark}[theorem]{Remark}
\newtheorem{definition}[theorem]{Definition}
\renewcommand{\theequation}{\thesection.\arabic{equation}}
%\catcode`@=11 \@addtoreset{equation}{section}
%\catcode`@=12

\section{Introduction}

In this article we consider the class of integro-differential Schr\"odinger equations
$$
\begin{array}{lcl}
- \mathcal{L}_{K} u +V(x)u = f(u),  &\mbox{ in }& \mathbb R^n,
\end{array}\leqno (P)
$$
where $- \mathcal{L}_{K}$ is a integro-differential operator given by
$$
- \mathcal{L}_{K}u(x)= 2 \int_{\mathbb{R}^{n}}(u(x)-u(y))K(x-y)dy
$$
and $K$  satisfy general properties. This study leads both to nonlocal and to nonlinear difficulties. For example, we can not benefit from the $s$-harmonic extension of \cite{caf} or commutator properties (see \cite{secchi}). 

The study of nonlocal operators is important because they intervene in a quantity of applications and models. For example, we mention their use in phase transition models (see \cite{al}, \cite{cab}), image reconstruction problems (see \cite{gil}), obstacle problem, optimization, finance, phase transitions. Integro-differential equations arise naturally in the study of stochastic processes with jumps, and more precisely of Lévy processes.

This paper was motivated by \cite{alv}, where the authors study the existence of positive solutions for the problem 
$$
\left\{\begin{array}{lcl}
- \Delta u +V(x)u = f(u),  &\mbox{ in }& \mathbb R^n, \\ u \in D^{1,2}(\mathbb{R}^{n})
\end{array}\right.
$$
where $V$ and $f$ are continuous functions with $V$ being a nonnegative function and $f$ having a subcritical or critical growth. Our purpose is to study an analogously problem, considering the operator $-\mathcal{L}_{k}$ instead of the Laplacian operator.

Several papers have studied the problem $(P)$ when $K (x) = \frac{C_{n,s}}{2}|x|^{n + 2s}$,
where 
$$
C_{n,s}=\left(\int_{\mathbb{R}^{n}}\frac{1-\cos(\xi_{1})}{|\xi|^{n+2s}}d \xi \right)^{-1},
$$ 
 that is, when the operator $-\mathcal{L}_{k}$ is the fractional Laplacian operator (see \cite{dpv}). Next, we will mention some of these papers. In \cite{vin}, the author has proved the existence of positive solutions from $(P)$ when $V$ is a constant small enough. Also, in \cite{raq}, the problem was studied when $f$ is asymptotically linear and $V$ is constant. In \cite{you}, the authors study the problem $(P)$ when $V \in C^{n}(\mathbb{R}^{n},\mathbb{R})$, $V$ is positive and 
 $$\lim\limits_{n \rightarrow \infty}V(|x|) \in (0, \infty].$$ In \cite{hui}, the authors has Studied $(P)$ when $ V $ and $ f $ are asymptotically periodic. When $V=1$, Felmer et al. has studied the existence,
regularity and qualitative properties of ground states solutions for problem $(P)$ (see \cite{fel}). In \cite{ten}, Teng and He have shown the existence of solution for $(P)$ when $$f(x,u)=P(x)|u|^{p-2}u+Q(x)|u|^{2^{\ast}_{s}-2}u,$$ where $2 < p < 2^{\ast}_{s}$ and the potential functions $P(x)$ and $Q(x)$ satisfy certain hypothesis. In \cite{jiafa}, the authors have shown the existence of solution for $(P)$ when $V \in C^{n}(\mathbb{R}^{n}, \mathbb{R})$ and there exists $r_{0} > 0$ such that, for any $M > 0$,
 $$\mbox{meas}(\left\{x \in B_{r_{0}}(y); V (x)\leq M\right\})\rightarrow 0\mbox{ as }|y| \rightarrow  \infty.$$ In \cite{secchi} the problem $(P)$ was studied when $V \in C^1(\mathbb{R}^{n},\mathbb{R})$, $\liminf_{|x|\rightarrow \infty}V(x)\geq V_{\infty}$ where $V_{\infty}$ is constant, and $f \in C^{1}(\mathbb{R}^{n}, \mathbb{R})$. By method of the Nehari manifold , Sechi has shown that the problem $(P)$ has a solution if $V\leq V_{\infty}$, but $V$ is not identically equal to $V_{\infty}$, where $V_{\infty}$ is a constant. Also in \cite{secchi}, Secchi have obtained the existence of ground state solutions of $(P)$ for general $s \in (0,1)$ when $V(x)\rightarrow \infty$ as $|x| \rightarrow \infty$. In \cite{zhang2}, the authors obtain the existence of a sequence of radial and non radial solutions for the problem $(P)$ when $V$ and $f$ are radial functions.  Some other interesting studies by variational methods of the problem $(P)$ can be found in \cite{vincenzo}, \cite{bisci}, \cite{xiaojun}, \cite{chang}, \cite{chen}, \cite{pietro}, \cite{mou}, \cite{tian}, \cite{sof}, \cite{edmundo}, \cite{secchi2}, \cite{yane}, \cite{kaimin}, \cite{qingxuan} and \cite{yang2}. Many of them use strong tools that we can not use here in our problem, as the $s$-harmonic extension and commutator properties.

Here, we will admit that the potential $V$ is continuous and satisfies,
\begin{itemize}
\item$(V_{1}-)$ $\inf_{x \in \mathbb{R}^{n}}V(x)>0;$
\item$(V_{2}-)$ $V(x)\leq V_{\infty}$ for same constant $V_{\infty}>0$ and for all $x \in B_{1}(0)$.
\end{itemize}
Note that, $(V_{1})$ implies that
\begin{itemize}
\item$(V_{3}-)$ There are $R>0$ and $\Lambda>0$ such that
$$
V(x)\geq \Lambda
$$
for all $|x|\geq R$.
\end{itemize}
Also, we will assume that
$f\in C(\mathbb R,\mathbb R)$ is a function satisfying:
\begin{itemize}
\item $(f_{1}-)$ $|f(s)|\leq c_{0}|s|^{p-1}$, for some constant $C>0$ and $p \in (2,2^{\ast}_{s})$;
\item$(f_{2}-)$ There is $\theta>2$ such that
$$
\theta F(s)\leq sf(s)
$$
for all $s>0$;
\item$(f_{3}-)$ $f(t)>0$ for all $t>0$ and $f(t)=0$ for all $t<0$. 

\end{itemize}

The kernel $K:\mathbb{R}^{n}\rightarrow \left(0, \infty\right)$ is a measurable function such that 
\begin{itemize}
\item$(K_{1}-)$ $K(x)=K(-x)$ for all $x \in \mathbb{R}^{n}$;
\item $(K_{2}-)$ There is $\lambda>0$ and $s \in (0,1)$ such that $\lambda \leq K(x)|x|^{n+2s}$ almost everywere in $\mathbb{R}^{n}$;
\item$(K_{3}-)$ $\gamma K \in L^{1}(\mathbb{R}^{n})$, where $\gamma(x)=\min\left\{|x|^{2},1\right\}$.
\end{itemize}
Note that, when $K(x)=\frac{C_{n,s}}{2}|x|^{-(n+2s)}$ we have that $-\mathcal{L}_{k}$ is a fractional laplacian, $(-\Delta)^{s}$. 

Our paper is organized as follows. In section $2$, we will present some properties of the space in which we will study the problem $(P)$. In section $3$, we will define an auxiliary problem and we will show that the functional energy associated with the auxiliary problem satisfies the condition of Palais-Smale. By difficulty nonlocal of the operator $-\mathcal{L}_{K}$, we will can not use the same technique used in \cite{alv}. Therefore, we will present an another technique to show this result.  In section $4$, we will prove that a general estimative for weak solution of
$$
-\mathcal{L}_{K}u+b(x)u=g(x,u),
$$
where $b\geq 0$, $|g(x,t)|\leq h(x)|t|$ and $h\in L^{q}(\mathbb{R}^{n})$ with $q>\frac{n}{2s}$. We will show that there is $M=M(q,||h||_{L^{q}})$ such that the solution $u$ satisfies
$$
||u||_{\infty} \leq M||u||_{2^{\ast}_{s}}.
$$
In \cite{alv2}, using the $s$-harmonic extension of \cite{caf}, the authors has shown the same estimate when $ -\mathcal{L}_{K}$ is the fractional Laplacian operator. In our case, we can not use the $s$-harmonic extension, because we do not have an analogously extension for integro-differential operators. In section $5$, we show our main result in this paper, the Theorem \ref{thm52}.

\section{Preliminaries}

Let $s \in (0,1)$, we denote by $H^{s}(\mathbb{R}^{n})$ the  fractional sobolev space. It is defined as 
$$
H^{s}(\mathbb{R}^{n}):=\left\{u \in L^{2}(\mathbb{R}^{n});\int_{\mathbb{R}^{n}}\int_{\mathbb{R}^{n}}\frac{(u(x)-u(y))^{2}}{|x-y|^{n+2s}}dxdy<\infty \right\}.
$$ 
The space $H^{s}(\mathbb{R}^{n})$ is a Hilbert space with the norm
$$
||u||_{H^{s}}=\left(\int_{\mathbb{R}^{n}}|u|^{2}dx+\int_{\mathbb{R}^{n}}\int_{\mathbb{R}^{n}}\frac{(u(x)-u(y))^{2}}{|x-y|^{n+2s}}dxdy\right)^{\frac{1}{2}}
$$

We define $X$ as the linear space of Lebesgue measurable functions from $\mathbb{R}^{n}$ to $\mathbb{R}$ such that any function $u$ in $X$ belongs to $L^{2}(\mathbb{R}^{n})$ and the function
$$
(x,y)\longmapsto(u(x)-u(y))\sqrt{K(x-y)}
$$
is in $L^{2}(\mathbb{R}^{n}\times \mathbb{R}^{n})$.
The function 
$$
||u||_{X}:= \left(\int_{\mathbb{R}^{n}}u^{2}dx+\int_{\mathbb{R}^{n}}\int_{\mathbb{R}^{n}}(u(x)-u(y))^{2}K(x-y)dxdy\right)^{\frac{1}{2}}
$$
defines a norm in $X$ and $(X, ||\cdot||_{X})$ is a Hilbert space.
By $(K_{2})$, the space $X$ is continuously embedded in $H^{s}(\mathbb{R}^{n})$. Therefore, $X$ is continuously embedded in $L^{p}(\mathbb{R}^{n})$ for $p \in \left[2,2^{\ast}_{s}\right]$, where $2^{\ast}_{s}=\frac{2n}{n-2s}$. If $\Omega\subset \mathbb{R}^{n}$, we define
$$
X_{0}(\Omega)=\left\{u \in X; u=0 \mbox{ in }  \space \mathbb{R}^{n}\setminus \Omega.\right\}.
$$
The space $X_{0}(\Omega)$ is a Hilbert Space with the norm
$$
u \longmapsto ||u||_{X_{0}(\Omega)}:=\left(\int_{\Omega}u^{2}dx+\int_{Q}\left(u(x)-u(y)\right)^{2}K(x-y)dxdy\right)^{\frac{1}{2}},
$$
where $Q=(\mathbb{R}^{n}\times\mathbb{R}^{n}) \setminus (\Omega^{c} \times \Omega^{c})$ (see 
Lemma $7$ in \cite{raf}). It is continuously embedded in $H_{0}^{s}(\mathbb{R}^{n})$. For definition and properties of $H_{0}^{s}(\mathbb{R}^{n})$ we indicate \cite{dpv}.

In the problem $(P)$ we will consider the space $E$ defined as
\begin{equation}\label{def21}
E=\left\{u \in X; \int_{\mathbb{R}^{n}}V(x)u^{2}dx< \infty \right\}
\end{equation}
The space $E$ is a Hilbert space with the norm
$$
u \longmapsto 
||u||:=\left(\int_{\mathbb{R}^{n}}\int_{\mathbb{R}^{n}}\left(u(x)-u(y)\right)^{2}K(x-y)dxdy+\int_{\mathbb{R}^{n}}V(x)u^{2}dx\right)^{\frac{1}{2}}.
$$
If $u,v \in C_{0}^{\infty}(\mathbb{R}^{n})$ then
$$
(-\mathcal{L}_{k}u,v)_{L^{2}}=[u,v].
$$
where
$$
[u,v]=\int_{\mathbb{R}^{n}}\int_{\mathbb{R}^{n}}\left(u(x)-u(y)\right)\left(v(x)-v(y)\right)K(x-y)dxdy.
$$
Therefore, we say that $u\in E$ is a solution for the problem $(P)$ if
$$
[u,v]+\int_{\mathbb{R}^{n}}V(x)uvdx = \int_{\mathbb{R}^{n}}f(u)vdx
$$
for all $v \in E$, that is
$$
\int_{\mathbb{R}^{n}}\int_{\mathbb{R}^{n}}\left(u(x)-u(y)\right)\left(v(x)-v(y)\right)K(x-y)dxdy+\int_{\mathbb{R}^{n}}V(x)uvdx = \int_{\mathbb{R}^{n}}f(u)vdx.
$$

Let $A, B \subset \mathbb{R}^{n}$ and $u,v \in X$. We will denote
$$
[u,v]_{A\times B}=\int_{A}\int_{B}(u(x)-u(y))(v(x)-v(y))K(x-y)dxdy
$$
and we will denote $[u,v]_{\mathbb{R}^{n}\times \mathbb{R}^{n}}$ by $[u,v]$.

The Euler-Lagrange functional associated with $(P)$ is given by
$$
I(u)=\frac{1}{2}||u||^{2}-\int_{\mathbb{R}^{n}}F(u)dx,
$$
where
$$
F(t)=\int_{0}^{t}f(s)ds.
$$
From hypothesis about $f$, the functional is $C^{1}(E,\mathbb{R})$ and
$$
I'(u)v=[u,v]+\int_{\mathbb{R}^{n}}V(x)uvdx-\int_{\mathbb{R}^{n}}f(u)vdx.
$$

We will denote by $B$ the unitary ball of $\mathbb{R}^{n}$. Define $I_{0}:X_{0}(B)\longrightarrow \mathbb{R}$ by
$$
I_{0}(u)=:\int_{\mathbb{R}^{n}} \int_{\mathbb{R}^{n}}\left(u(x)-u(y)\right)^{2}K(x-y)dxdy+\int_{\mathbb{R}^{n}}V_{\infty}u^{2}dx-\int_{\mathbb{R}^{n}}F(u)dx,
$$
where $V_{\infty}$ is the constant of $(V_{2})$. The functional $I_{0}$ has the mountain pass geometry. We will denote by $d$ the mountain pass level associated with $I_{0}$, that is
$$
d= \inf_{\gamma \in \Gamma} \max_{t \in [0,1]}I_{0}(\gamma(t)),
$$
where
\begin{equation}\label{eq1}
\Gamma=\left\{\gamma \in C([0,1],X_{0}(\Omega)); \gamma(0)=0\mbox{ and  }\gamma(1)=e\right\},
\end{equation}
with $e$ fixed and verifying $I_{0}(e)<0$. Note that $d$ depends only on $V_{\infty}$, $\theta$ and $f$.

\section{An Auxiliary Problem}
According to \cite{alv}, we will modified the problem defining an auxiliary problem. But, as the operator $-\mathcal{L}_{K}$ is nonlocal, we can not use the same ideas of \cite{alv} to prove that the functional associated the auxiliary problem satisfies the Palais-Smale condition. It is necessary that we use an another technics.

For $k=\frac{2 \theta}{\theta-2}$ we consider
$$
\begin{array}{lll}
\tilde{f}(x,t)&=
& \left\{
\begin{array}{lll}
f(t) & if & kf(t) \leq V(x)t \\
\frac{V(x)}{k}t& if & kf(t)>V(x)t
\end{array}
\right.
\end{array}
$$ 
and
\begin{equation}\label{eq32}
\begin{array}{lll}
g(x,t)&=
& \left\{
\begin{array}{lll}
f(t) & if & |x|\leq R \\
\tilde{f}(x,t)& if & |x|>R.
\end{array}
\right.
\end{array}
\end{equation}
And we define the auxiliary problem

$$
\left\{
\begin{array}{lllll}
-\mathcal{L}_{K}u+V(x)u&=&g(x,u)& in &\mathbb{R}^{n} \\
u \in E&&&&
\end{array}
\right.
$$
We have that, for all $t \in \mathbb{R}$ and $x \in \mathbb{R}^{n}$ 

\begin{enumerate}
\item $\tilde{f}(x,t)\leq f(t)$;
\item $g(x,t) \leq \frac{V(x)}{k}t$ ,se $|x|\geq R$;
\item $G(x,t)=F(t)$ se $|x|\leq R$
\item $G(x,t)\leq \frac{V(x)}{2k}t^{2}$ se $|x|>R$;
\end{enumerate}
where
$$
G(x,t)=\int_{0}^{t}g(x,s)ds.
$$
The Euler-Lagrange functional associated with the auxiliary problem is given by
$$
J(u)=\frac{1}{2}||u||^{2} - \int_{\mathbb{R}^{n}}G(x,u)dx.
$$
The functional $J \in C^{1}(X,\mathbb{R})$ and 
$$
\begin{array}{ll}
J'(u)v&=\int_{\mathbb{R}^{n}}\int_{\mathbb{R}^{n}}\left(u(x)-u(y)\right)\left(v(x)-v(y)\right)K(x-y)dxdy \\&+\int_{\mathbb{R}^{n}}V(x)uvdx - \int_{\mathbb{R}^{n}}g(x,u)vdx.
\end{array}
$$
The functional $J$ has the mountain pass geometry. Then, there is a sequence $\left\{u_{n}\right\}_{n \in \mathbb{N}}$ such that
\begin{equation}\label{def34}
J'(u_{n})\rightarrow 0
\mbox{ and } 
J(u_{n})\rightarrow c,
\end{equation}
where $c>0$ is the mountain pass level associated with $J$, that is
\begin{equation}\label{def35}
c= \inf_{\gamma \in \Gamma} \max_{t \in [0,1]}J(\gamma(t))
\end{equation}
where
$$
\Gamma=\left\{\gamma \in C([0,1],E); \gamma(0)=0\mbox{ and }, \gamma(1)=e\right\}.
$$
and $e$ is the function fixed in $\ref{eq1}$. By definition
\begin{equation}\label{eq:7}
c \leq d
\end{equation}
uniformly in $R>0$.
\begin{lemma}\label{lem31}
The sequence $\left\{u_{n}\right\}_{n \in \mathbb{N}}$ is bounded.
\end{lemma}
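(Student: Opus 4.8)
The plan is to run the standard Ambrosetti--Rabinowitz boundedness argument, adapted to the truncated nonlinearity $g$, exploiting that the constant $k=\frac{2\theta}{\theta-2}$ has been chosen precisely so that the contribution of the ``bad'' region $|x|>R$ can be absorbed into the norm. First I would form the combination $J(u_n)-\frac{1}{\theta}J'(u_n)u_n$. Using $J(u)=\frac12\|u\|^2-\int_{\mathbb{R}^n}G(x,u)\,dx$ and $J'(u)u=\|u\|^2-\int_{\mathbb{R}^n}g(x,u)u\,dx$, this gives
\[
\left(\tfrac12-\tfrac1\theta\right)\|u_n\|^2 = J(u_n)-\tfrac1\theta J'(u_n)u_n + \int_{\mathbb{R}^n}\Big(G(x,u_n)-\tfrac1\theta g(x,u_n)u_n\Big)\,dx,
\]
so the whole argument reduces to bounding the last integral from above by a suitable multiple of $\|u_n\|^2$.

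Next I would split the integral over $\{|x|\le R\}$ and $\{|x|>R\}$. On $\{|x|\le R\}$ we have $G(x,t)=F(t)$ and $g(x,t)=f(t)$ by property $(3)$ and the definition \eqref{eq32}; the condition $(f_{2})$ together with $(f_{3})$ (which makes $F(t)=0=f(t)$ for $t\le 0$) yields $F(t)-\frac1\theta f(t)t\le 0$ for every $t$, so this part of the integral is nonpositive. On $\{|x|>R\}$ I would use two facts: first, $g(x,t)t\ge 0$ for all $t$ (since $g$ equals either $f$, with $f(t)t\ge0$ by $(f_{3})$, or $\frac{V(x)}{k}t$, giving $\frac{V(x)}{k}t^{2}\ge0$), so that $-\frac1\theta g(x,u_n)u_n\le 0$; and second, property $(4)$ gives $G(x,u_n)\le \frac{V(x)}{2k}u_n^2$. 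Hence
\[
\int_{|x|>R}\Big(G(x,u_n)-\tfrac1\theta g(x,u_n)u_n\Big)\,dx \le \frac{1}{2k}\int_{\mathbb{R}^n}V(x)u_n^2\,dx \le \frac{1}{2k}\|u_n\|^2.
\]

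Combining these estimates, and using that $k=\frac{2\theta}{\theta-2}$ gives exactly $\frac{1}{2k}=\frac12\left(\frac12-\frac1\theta\right)$, I would obtain
\[
\tfrac12\left(\tfrac12-\tfrac1\theta\right)\|u_n\|^2 \le J(u_n)-\tfrac1\theta J'(u_n)u_n.
\]
Finally, since $J(u_n)\to c$ is bounded and $\|J'(u_n)\|_{E'}\to 0$ yields $|J'(u_n)u_n|\le \|J'(u_n)\|_{E'}\|u_n\|=o(1)\|u_n\|$, the right-hand side is bounded by $C+o(1)\|u_n\|$. Therefore $\|u_n\|^{2}\le C'+C''\|u_n\|$, which forces $\{\|u_n\|\}$ to be bounded.

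I expect the only delicate point to be the region $\{|x|>R\}$: one must verify both the sign $g(x,t)t\ge 0$ and the quadratic bound on $G$, and then check that the resulting coefficient $\frac{1}{2k}$ is strictly smaller than $\frac12-\frac1\theta$, so that the $\|u_n\|^2$ term survives with a strictly positive coefficient. This is exactly what the specific choice $k=\frac{2\theta}{\theta-2}$ guarantees; everything else is the routine absorption of the $o(1)\|u_n\|$ term.
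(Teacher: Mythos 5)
Your proposal is correct and follows essentially the same route as the paper: form $J(u_n)-\frac{1}{\theta}J'(u_n)u_n$, drop the nonpositive contribution on $\{|x|\le R\}$ via $(f_2)$--$(f_3)$, bound the contribution on $\{|x|>R\}$ by $\frac{1}{2k}\|u_n\|^2$ using property $(4)$ and the sign of $g(x,t)t$, and exploit $\frac{1}{2k}=\frac{\theta-2}{4\theta}$ to retain the positive coefficient $\frac{\theta-2}{4\theta}$ on $\|u_n\|^2$. The only difference is cosmetic bookkeeping (the paper splits the coefficient $\frac{1}{2}-\frac{1}{\theta}$ into two halves at the outset, while you absorb the $\frac{1}{2k}\|u_n\|^2$ term at the end), and your explicit final absorption of the $o(1)\|u_n\|$ term fills in a step the paper leaves implicit.
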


\begin{proof}
By $(f_{2})$, $(3)$ and $(4)$ 
$$
\begin{array}{ll}
&J(u)-\frac{1}{\theta}J'(u)u\\
& = \left(\frac{\theta-2}{4\theta}\right)||u||^{2}+ \frac{1}{2k}||u||^{2} + \int_{\mathbb{R}^{n}}\frac{1}{\theta}g(x,u)u-G(x,u)dx \\
&\geq \left(\frac{\theta-2}{4\theta}\right)||u||^{2}+ \frac{1}{2k}||u||^{2} + \int_{|x|>R}\frac{1}{\theta}g(x,u)u - \frac{1}{2k}\int_{|x|>R}V(x)u^{2}dx \\
& \geq \left(\frac{1}{2k}\right)||u||^{2}.
\end{array}
$$
Thereby
\begin{equation}\label{eq:8}
|J(u)|+|J'(u)u|\geq \left(\frac{\theta-2}{4\theta}\right)||u||^{2}
\end{equation}
for all $u \in E$.
This last inequality ensures that the sequence is bounded.
\end{proof}
 
Let $r>R$ and $A=\left\{x \in \mathbb{R}^{n}; r<||x||<2r\right\}$. Consider $\eta:\mathbb{R}^{n}\rightarrow \mathbb{R}$ a function such that $\eta=1$ in $B_{2r}^{c}(0)$, $\eta=0$ in $B_{r}(0)$, $0\leq\eta\leq1$ and $|\nabla \eta|<\frac{2}{r}$.
Note that 
\begin{equation}\label{eq:1}
(B_{r} \times B_{r})^{c} = (B_{r}^{c}\times \mathbb{R}^{n}) \cup ( B_{r} \times B_{r}^{c}),
\end{equation}
where $B_{r}=B_{r}(0)$ and $B_{2r}=B_{2r}(0)$. We will decompose
\begin{equation}\label{eq:2}
 B_{r}^{c}\times \mathbb{R}^{n} = (A \times \mathbb{R}^{n}) \cup (B_{2r}^{c}\times B_{r}) \cup (B_{2r}^{c}\times A) \cup (B_{2r}^{c}\times B_{2r}^{c})
\end{equation}
and
\begin{equation}\label{eq:3}
B_{r} \times B_{r}^{c} = (B_{r} \times A) \cup (B_{r} \times B_{2r}^{c})
\end{equation} 

\begin{lemma}\label{lem32}
We have that
$$
\begin{array}{ll}
&\int_{B_{r}}\int_{B_{2r}^{c}}(u_{n}(x)-u_{n}(y))(\eta(x)u_{n}(x)-\eta(y)u_{n}(y))K(x-y)dxdy\\
&+\int_{B_{2r}^{c}}\int_{B_{r}}(u_{n}(x)-u_{n}(y))(\eta(x)u_{n}(x)-\eta(y)u_{n}(y))K(x-y)dxdy \\
& \geq-\int_{B_{r}}\int_{B_{2r}^{c}}u_{n}(y)^{2}K(x-y)dxdy
\end{array}
$$
\end{lemma}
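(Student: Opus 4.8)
The plan is to exploit the two defining properties of the cutoff, namely $\eta\equiv 0$ on $B_r$ and $\eta\equiv 1$ on $B_{2r}^c$, which make the factor $\eta(x)u_n(x)-\eta(y)u_n(y)$ collapse on both product regions appearing in the statement. First I would note that on $B_r\times B_{2r}^c$ the cutoff vanishes at the point lying in $B_r$ and equals $1$ at the point lying in $B_{2r}^c$, and symmetrically on $B_{2r}^c\times B_r$; in either case one of the two terms in $\eta(x)u_n(x)-\eta(y)u_n(y)$ drops out and the surviving one is $u_n$ evaluated at the point of $B_{2r}^c$. Writing $w$ for the value of $u_n$ at the point in $B_{2r}^c$ and $z$ for its value at the point in $B_r$, the integrand $(u_n(x)-u_n(y))(\eta(x)u_n(x)-\eta(y)u_n(y))$ reduces in both cases to $w^2-wz$.

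Next I would symmetrise. By $(K_{1})$ the density $K(x-y)$ is even, so the full integrand $(u_n(x)-u_n(y))(\eta(x)u_n(x)-\eta(y)u_n(y))K(x-y)$ is invariant under the interchange $(x,y)\mapsto(y,x)$; since this interchange maps $B_r\times B_{2r}^c$ onto $B_{2r}^c\times B_r$, the two integrals on the left-hand side are equal. Hence the left-hand side equals $2\int(w^2-wz)\,K$, the integral being over all pairs with one point in $B_{2r}^c$ and the other in $B_r$.

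The decisive step is then the elementary identity
$$
2(w^2-wz)=w^2+(w-z)^2-z^2\ge -z^2,
$$
valid for all reals $w,z$. Multiplying by $K\ge 0$ (indeed $K>0$ by hypothesis) and integrating, the indefinite cross term is absorbed at the sole cost of $-\int z^2\,K$. Since $z$ is the value of $u_n$ at the point of $B_r$, this remaining integral is precisely $\int_{B_r}\int_{B_{2r}^c}u_n(y)^2K(x-y)\,dx\,dy$, and the asserted inequality follows.

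I expect no serious obstacle here beyond careful bookkeeping. The two minor points to verify are that every integral is finite -- which holds because $u_n\in L^2(\mathbb R^n)$ and, the sets $B_r$ and $B_{2r}^c$ being separated by distance at least $r$, the kernel $K(x-y)$ is bounded away from its singularity at the origin (with $\gamma K\in L^1$ by $(K_{3})$) -- and that the variable labelling is tracked so that the surviving square $z^2$ is attached to the $B_r$ point, matching the $u_n(y)^2$ in the statement. The only genuinely substantive idea is recognising that completing the square, $2(w^2-wz)+z^2=w^2+(w-z)^2$, lets the sign-indefinite cross term be controlled by a single nonnegative boundary-type quantity.
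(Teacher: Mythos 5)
Your proof is correct and takes essentially the same route as the paper's: after using $\eta\equiv 1$ on $B_{2r}^{c}$, $\eta\equiv 0$ on $B_{r}$ and the symmetry $(K_{1})$ to merge the two integrals into one, your completed square $2(w^{2}-wz)=w^{2}+(w-z)^{2}-z^{2}\ge -z^{2}$ is precisely the paper's identity $2u_{n}(x)(u_{n}(x)-u_{n}(y))=(u_{n}(x)-u_{n}(y))^{2}+u_{n}(x)^{2}-u_{n}(y)^{2}$ followed by discarding the two nonnegative terms. No gaps.
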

\begin{proof}
$$
\begin{array}{ll}
&\int_{B_{r}}\int_{B_{2r}^{c}}(u_{n}(x)-u_{n}(y))(\eta(x)u_{n}(x)-\eta(y)u_{n}(y))K(x-y)dxdy \\
& +\int_{B_{2r}^{c}}\int_{B_{r}}(u_{n}(x)-u_{n}(y))(\eta(x)u_{n}(x)-\eta(y)u_{n}(y))K(x-y)dxdy \\
& =\int_{B_{r}}\int_{B_{2r}^{c}}u_{n}(x)(u_{n}(x)-u_{n}(y))K(x-y)dxdy\\
&-  \int_{B_{2r}^{c}}\int_{B_{r}}u_{n}(y)(u_{n}(x)-u_{n}(y))K(x-y)dxdy \\
&= \int_{B_{r}}\int_{B_{2r}^{c}}u_{n}(x)(u_{n}(x)-u_{n}(y))K(x-y)dxdy\\
&-  \int_{B_{r}}\int_{B_{2r}^{c}}u_{n}(y)(u_{n}(x)-u_{n}(y))K(x-y)dydx \\
& = \int_{B_{r}}\int_{B_{2r}^{c}}(u_{n}(x)-u_{n}(y))^{2}K(x-y)dxdy \\
&+ \int_{B_{r}}\int_{B_{2r}^{c}}(u_{n}(y)+u_{n}(x))(u_{n}(x)-u_{n}(y))K(x-y)dxdy\\
& = \int_{B_{r}}\int_{B_{2r}^{c}}(u_{n}(x)-u_{n}(y))^{2}K(x-y)dxdy \\
&+ \int_{B_{r}}\int_{B_{2r}^{c}}u_{n}(x)^{2}-u_{n}(y)^{2}K(x-y)dxdy \\
& \geq -\int_{B_{r}}\int_{B_{2r}^{c}}u_{n}(y)^{2}K(x-y)dxdy
\end{array}
$$
\end{proof}
\begin{lemma}\label{lem33}
Let $\epsilon>0$. There is $r_{0}>0$ such that if $r>r_{0}$ then
$$
\int_{B_{r}}\int_{B_{2r}^{c}}u_{n}(y)^{2}K(x-y)dxdy<\epsilon,
$$
for all $n \in \mathbb{N}$.
\end{lemma}

\begin{proof}
For each $y\in B_{r}(0)$
$$
B_{r}(y) \subset B_{2r}(0).
$$
Then
\begin{equation}\label{def311}
\begin{array}{ll}
&\int_{B_{2r}(0)^{c}}K(x-y)dx \\
&\leq \int_{B_{r}(y)^{c}}K(x-y)dx\\
& = \int_{B_{r}(0)^{c}}K(z)dz. \\
\end{array}
\end{equation}
By Lemma \ref{lem31}, there is $L>0$ such that $||u_{n}||_{L^{2}}^{2}<L$ for all $n \in \mathbb{N}$.
By $(K_{3})$, there is $r_{0}>0$ such that 
$$
\int_{B_{r}(0)^{c}}K(z)dz<\frac {\epsilon}{L},
$$
for all $r>r_{0}$. Then by \ref{def311}, for all $n \in \mathbb{N}$ and $r>r_{0}$
$$
\begin{array}{ll}
&\int_{B_{r}(0)}\int_{B_{2r}(0)^{c}}u_{n}(y)^{2}K(x-y)dxdy\\
& =\int_{B_{r}(0)}u_{n}(y)^{2}\int_{B_{2r}(0)^{c}}K(x-y)dxdy\\
& \leq \int_{B_{r}(0)}u_{n}(y)^{2}\int_{B_{r}(0)^{c}}K(z)dzdy \\
& = \int_{B_{r}(0)^{c}}K(z)dz\int_{B_{r}(0)}u_{n}(y)^{2}dy\\
& \leq \epsilon
\end{array}
$$
\end{proof}

\begin{lemma}\label{lem34}
There are constants $K_{1}>0$ and $K_{2}>0$ such that
$$
\begin{array}{ll}
&\int_{A}\int_{\mathbb{R}^{n}}|u_{n}(y)||(u_{n}(x)-u_{n}(y))||(\eta(x)-\eta(y))|K(x-y)dxdy \\&\leq \frac{2K_{1}}{r}||u_{n}||_{L^{2}(A)}[u_{n},u_{n}]^{\frac{1}{2}} + K_{2}||u_{n}||_{L^{2}(A)}[u_{n},u_{n}]^{\frac{1}{2}}.
\end{array}
$$
\end{lemma}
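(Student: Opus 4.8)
The plan is to split the inner $x$-integration according to whether $|x-y|\leq 1$ or $|x-y|>1$, exploiting the two complementary bounds available for the cutoff $\eta$. Since $|\nabla\eta|<\frac{2}{r}$, the mean value theorem gives the Lipschitz estimate $|\eta(x)-\eta(y)|\leq\frac{2}{r}|x-y|$ everywhere, while $0\leq\eta\leq1$ yields the trivial bound $|\eta(x)-\eta(y)|\leq1$. On the near-diagonal region I would use the first bound and on the far region the second, and in both cases estimate the resulting double integral by the Cauchy--Schwarz inequality so that the factor $[u_n,u_n]^{1/2}$ is produced. Note that, since $y$ ranges over $A$, the factor $|u_n(y)|$ will always assemble into $||u_{n}||_{L^{2}(A)}$.

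More precisely, on the region $\{|x-y|\leq1\}$ I would write the integrand as the product $\big(|u_n(y)|\,|x-y|\sqrt{K(x-y)}\big)\big(|u_n(x)-u_n(y)|\sqrt{K(x-y)}\big)$, insert the factor $\frac{2}{r}$ coming from the Lipschitz bound, and apply Cauchy--Schwarz in the variables $(x,y)$. The factor built from $|u_n(x)-u_n(y)|\sqrt{K(x-y)}$ integrates to at most $[u_n,u_n]^{1/2}$, while the other factor equals $\big(\int_A |u_n(y)|^2 \int_{|x-y|\leq1}|x-y|^2 K(x-y)\,dx\,dy\big)^{1/2}$. The translation $z=x-y$ turns the inner integral into $\int_{|z|\leq1}|z|^2K(z)\,dz=\int_{|z|\le 1}\gamma(z)K(z)\,dz$, which is finite by $(K_3)$ since there $\gamma(z)=|z|^2$; calling its square root $K_1$, this region contributes $\frac{2K_1}{r}||u_{n}||_{L^{2}(A)}[u_n,u_n]^{1/2}$.

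On the complementary region $\{|x-y|>1\}$ I would instead use $|\eta(x)-\eta(y)|\leq1$ and again apply Cauchy--Schwarz, splitting as $\big(|u_n(y)|\sqrt{K(x-y)}\big)\big(|u_n(x)-u_n(y)|\sqrt{K(x-y)}\big)$. The second factor is again at most $[u_n,u_n]^{1/2}$; the first reduces, after the same translation, to $\int_{|z|>1}K(z)\,dz=\int_{|z|>1}\gamma(z)K(z)\,dz<\infty$ by $(K_3)$ (here $\gamma(z)=1$). Denoting its square root by $K_2$, this region contributes $K_2||u_{n}||_{L^{2}(A)}[u_n,u_n]^{1/2}$, and adding the two contributions yields the claimed inequality.

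The only point requiring genuine care, rather than routine bookkeeping, is the decomposition of the kernel integral: one must recognize that $(K_3)$ is tailored precisely to control $|z|^2K(z)$ near the origin and $K(z)$ away from it, so that splitting the $x$-domain exactly at $|x-y|=1$ — matching the breakpoint in $\gamma(z)=\min\{|z|^2,1\}$ — is what makes both pieces finite simultaneously. Everything else is a direct application of the Lipschitz property of $\eta$ and the Cauchy--Schwarz inequality.
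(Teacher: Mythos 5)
Your proposal is correct and is essentially the paper's own argument: the paper also splits at $|x-y|=1$ to match the breakpoint of $\gamma$, uses the Lipschitz bound $\frac{2}{r}|x-y|$ near the diagonal and boundedness of $\eta$ far from it, invokes $(K_3)$ to define the constants $P_1=\int_{B_1}|z|^2K(z)\,dz$ and $P_2=\int_{B_1^c}K(z)\,dz$, and finishes with Cauchy--Schwarz to produce $\|u_n\|_{L^2(A)}[u_n,u_n]^{1/2}$. The only (immaterial) difference is bookkeeping order: the paper first bounds $\int_{\mathbb{R}^n}|\eta(x)-\eta(y)|^2K(x-y)\,dx$ uniformly in $y$ and then applies Cauchy--Schwarz in $x$ and in $y$, while you split the double integral into the two regions first and apply Cauchy--Schwarz jointly in $(x,y)$ on each piece.
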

\begin{proof}

Note that
$$
\begin{array}{ll}
\int_{\mathbb{R}^{n}}|\eta(x)-\eta(y)|^{2}K(x-y)dx& = \int_{\mathbb{R}^{n}}|\eta(z+y)-\eta(y)|^{2}K(z)dz \\
& =\int_{B_{1}(0)}|\eta(z+y)-\eta(y)|^{2}K(z)dz\\
&+\int_{B_{1}^{c}(0)}|\eta(z+y)-\eta(y)|^{2}K(z)dz \\
& \leq \frac{4}{r^{2}}\int_{B_{1}(0)}|z|^{2}K(z)dz +4\int_{B_{1}(0)^{c}}K(z)dz \\
& \leq \frac{4}{r^{2}}P_{1}+4P_{2},
\end{array}
$$
where
$$
P_{1}=\int_{B_{1}}|z|^{2}K(z)dz 
$$ and
$$
P_{2}= \int_{B_{1}^{c}}K(z)dz.
$$
Let $K_{1}=2\sqrt{P_{1}}$ and $K_{2}=2\sqrt{P_{2}}$. Then, by Holder inequality
$$
\begin{array}{ll}
&\int_{A}\int_{\mathbb{R}^{n}}|u_{n}(y)||(u_{n}(x)-u_{n}(y))||(\eta(x)-\eta(y))|K(x-y)dxdy \\
& \leq (\frac{2\sqrt{P_{1}}}{r}+2\sqrt{P_{2}})\int_{A}|u_{n}(y)|\left(\int_{\mathbb{R}^{n}}|(u_{n}(x)-u_{n}(y))|^{2}K(x-y)dx\right)^{\frac{1}{2}}dy\\
& \leq (\frac{K_{1}}{r}+K_{2})||u_{n}||_{L^{2}(A)}[u_{n},u_{n}]^{\frac{1}{2}}.
\end{array}
$$
\end{proof}

\begin{lemma}\label{lem35}
For the same constants $K_{1}>0$ and $K_{2}>0$ of the Lemma \ref{lem34}
$$
\begin{array}{ll}
&\int_{B_{r}}\int_{A}|u_{n}(x)-u_{n}(y)||\eta(x)u_{n}(x)-\eta(y)u_{n}(y)|K(x-y)dxdy \\&\leq \frac{K_{1}}{r}||u_{n}||_{L^{2}(A)}[u_{n},u_{n}]^{\frac{1}{2}} + K_{2}||u_{n}||_{L^{2}(A)}[u_{n},u_{n}]^{\frac{1}{2}}.
\end{array}
$$
\end{lemma}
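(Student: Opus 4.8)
The plan is to exploit the fact that the cutoff $\eta$ vanishes on $B_r$ in order to reduce the integrand to exactly the shape treated in Lemma \ref{lem34}, and then to rerun that argument. First I would split the second factor by the algebraic identity
$$
\eta(x)u_n(x)-\eta(y)u_n(y)=\eta(x)\bigl(u_n(x)-u_n(y)\bigr)+u_n(y)\bigl(\eta(x)-\eta(y)\bigr).
$$
On the domain of integration $B_r\times A$ the first variable satisfies $x\in B_r=B_r(0)$, where $\eta(x)=0$ by the choice of the cutoff, so the first term drops out and
$$
|\eta(x)u_n(x)-\eta(y)u_n(y)|=|u_n(y)|\,|\eta(x)-\eta(y)|.
$$
Consequently the left-hand side of the claim equals $\int_{B_r}\int_A|u_n(x)-u_n(y)|\,|u_n(y)|\,|\eta(x)-\eta(y)|K(x-y)\,dx\,dy$, whose integrand is precisely the one estimated in Lemma \ref{lem34}.

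Second, I would apply the Cauchy--Schwarz inequality in the $x$-variable over $B_r$, separating $|u_n(x)-u_n(y)|\sqrt{K(x-y)}$ from $|\eta(x)-\eta(y)|\sqrt{K(x-y)}$. For the $\eta$-factor I would invoke the pointwise bound already established inside the proof of Lemma \ref{lem34},
$$
\int_{\mathbb{R}^n}|\eta(x)-\eta(y)|^2K(x-y)\,dx\le \frac{4}{r^2}P_1+4P_2,
$$
whose square root is at most $\frac{2\sqrt{P_1}}{r}+2\sqrt{P_2}=\frac{K_1}{r}+K_2$; restricting the $x$-integral from $\mathbb{R}^n$ to $B_r$ only decreases it, so the same bound persists. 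A further Cauchy--Schwarz in $y$ over $A$, pulling out $\|u_n\|_{L^2(A)}$ and bounding $\int_A\int_{\mathbb{R}^n}|u_n(x)-u_n(y)|^2K(x-y)\,dx\,dy\le [u_n,u_n]$, then yields $\left(\frac{K_1}{r}+K_2\right)\|u_n\|_{L^2(A)}[u_n,u_n]^{\frac{1}{2}}$, which is exactly the claimed right-hand side.

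I do not expect a genuine obstacle here: the statement is a variant of Lemma \ref{lem34}, and the only substantive point is the collapse of the product term forced by $\eta|_{B_r}=0$, which is what permits the single factor $\frac{K_1}{r}$ (rather than $\frac{2K_1}{r}$) to appear. The mild technical care lies in the order of integration---the claim integrates $dx\,dy$ with $x\in B_r$ and $y\in A$, so I would apply H\"older first in $x$ and then in $y$---and in checking that enlarging the $x$-domain from $B_r$ to $\mathbb{R}^n$ in the $\eta$-estimate and in the seminorm bound is harmless, which it is since every integrand is nonnegative. Everything else is the routine Cauchy--Schwarz bookkeeping already carried out for Lemma \ref{lem34}.
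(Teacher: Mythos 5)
Your proof is correct and takes essentially the same route as the paper: both exploit $\eta\equiv 0$ on $B_{r}$ to collapse $|\eta(x)u_{n}(x)-\eta(y)u_{n}(y)|$ into $|u_{n}(y)||\eta(x)-\eta(y)|$, and then reduce the resulting integral to the Cauchy--Schwarz estimate of Lemma \ref{lem34} (the paper by relabelling variables and invoking $(K_{1})$, you by rerunning that computation on the restricted domain). The only minor difference is that rerunning the argument, as you do, is what actually yields the constant $\frac{K_{1}}{r}+K_{2}$ claimed here, since the bound stated in Lemma \ref{lem34} carries the weaker factor $\frac{2K_{1}}{r}$ even though its proof establishes $\frac{K_{1}}{r}$.
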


\begin{proof}
Indeed, by property $(K_{1})$ of $K$
$$
\begin{array}{ll}
&\int_{B_{r}}\int_{A}|u_{n}(x)-u_{n}(y)||\eta(x)u_{n}(x)-\eta(y)u_{n}(y)|K(x-y)dxdy\\
& = \int_{B_{r}}\int_{A}|u_{n}(x)||u_{n}(x)-u_{n}(y)||\eta(x)|K(x-y)dxdy \\
& = \int_{A}\int_{B_{r}}|u_{n}(x)||u_{n}(x)-u_{n}(y)||n(x)-n(y)|K(x-y)dydx\\
& =\int_{A}\int_{B_{r}}|u_{n}(y)||u_{n}(y)-u_{n}(x)||n(y)-n(x)|K(y-x)dxdy

\end{array}
$$
Using $(K_{1})$ and Lemma \ref{lem34}, we prove this lemma.
\end{proof}

\begin{lemma}\label{ref36}
We have that
$$
\begin{array}{ll}
&-\int_{B_{2r}^{c}}\int_{A}u_{n}(y)(u_{n}(x)-u_{n}(y))(\eta(x)-\eta(y))K(x-y)dxdy \\
&\leq \frac{K_{1}}{r}||u_{n}||_{L^{2}(A)}[u_{n},u_{n}]^{\frac{1}{2}} + K_{2}||u_{n}||_{L^{2}(A)}[u_{n},u_{n}]^{\frac{1}{2}}.
\end{array}
$$
\end{lemma}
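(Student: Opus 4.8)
The plan is to reduce the whole estimate to Lemma \ref{lem34} by passing to absolute values and then enlarging the domain of integration. First I would discard the sign: since $-a\le|a|$ for every real number $a$ and $\left|\int f\right|\le\int|f|$, the quantity on the left-hand side is bounded by
\[
\int_{B_{2r}^{c}}\int_{A}|u_{n}(y)|\,|u_{n}(x)-u_{n}(y)|\,|\eta(x)-\eta(y)|\,K(x-y)\,dxdy,
\]
where the isolated factor $u_{n}(y)$ is evaluated at the point $y\in A$ while $x$ runs over $B_{2r}^{c}$.

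Next, since this new integrand is nonnegative and $B_{2r}^{c}\subset\mathbb{R}^{n}$, I would enlarge the $x$-domain from $B_{2r}^{c}$ to all of $\mathbb{R}^{n}$, which can only increase the integral; by Tonelli (the integrand being nonnegative) the resulting iterated integral may be rewritten with $y$ ranging over $A$ on the outside and $x$ over $\mathbb{R}^{n}$ on the inside, namely
\[
\int_{A}\int_{\mathbb{R}^{n}}|u_{n}(y)|\,|u_{n}(x)-u_{n}(y)|\,|\eta(x)-\eta(y)|\,K(x-y)\,dxdy.
\]
This is exactly the quantity estimated in Lemma \ref{lem34}, so applying that lemma yields the asserted bound with the very same constants $K_{1}$ and $K_{2}$, closing the argument.

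I do not anticipate a genuine obstacle, the entire content being the observation that $B_{2r}^{c}\times A$ is a subset of $\mathbb{R}^{n}\times A$. The one point requiring care is the bookkeeping of the two variables: the reduction to Lemma \ref{lem34} succeeds precisely because the standalone factor is $u_{n}(y)$ with $y\in A$, so that after enlarging only the $x$-domain the norm $||u_{n}||_{L^{2}(A)}$ reappears in the correct position. Note that no change of variables $x\leftrightarrow y$ is performed, so the symmetry hypothesis $(K_{1})$ is not even invoked at this step.
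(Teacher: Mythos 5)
Your argument rests on a misreading of which variable ranges over which set. In this paper's convention (confirmed by the definition of $[u,v]_{A\times B}$ and by the paper's own manipulations in Lemmas \ref{lem32} and \ref{ref36}, e.g.\ the step where $\eta(x)-\eta(y)$ is replaced by $\eta(x)-1$), the inner integral $\int_A\cdots\,dx$ carries the variable $x\in A$ and the outer integral $\int_{B_{2r}^{c}}\cdots\,dy$ carries $y\in B_{2r}^{c}$. So the standalone factor $u_{n}(y)$ is evaluated at $y\in B_{2r}^{c}$, not at $y\in A$ as you assume. Consequently, after taking absolute values and enlarging the domain, what you actually obtain is
$$
\int_{\mathbb{R}^{n}}\int_{A}|u_{n}(y)|\,|u_{n}(x)-u_{n}(y)|\,|\eta(x)-\eta(y)|\,K(x-y)\,dx\,dy,
\qquad x\in A,\ y\in\mathbb{R}^{n},
$$
which is \emph{not} the quantity of Lemma \ref{lem34}: there the lone factor $|u_{n}(y)|$ sits at the variable ranging over $A$, and that placement is precisely what produces the norm $||u_{n}||_{L^{2}(A)}$ via Cauchy--Schwarz. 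With the lone factor at the far variable, no bound by $||u_{n}||_{L^{2}(A)}[u_{n},u_{n}]^{\frac{1}{2}}$ can hold once the signs have been discarded: take $w$ a smooth bump equal to a large constant $M$ on a ball deep inside $B_{2r}^{c}$ and vanishing on $A$; then $||w||_{L^{2}(A)}=0$, so the claimed right-hand side vanishes, while the absolute-value integral is strictly positive (for such $x,y$ the integrand equals $M^{2}(1-\eta(x))K(x-y)>0$ on a set of positive measure, since $\eta\equiv 1$ on $B_{2r}^{c}$ and $\eta<1$ on part of $A$). So the step ``this is exactly the quantity estimated in Lemma \ref{lem34}'' fails, and the failure cannot be repaired by bookkeeping alone.

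This is exactly why the paper does not take absolute values first. Its proof uses the identity $-u_{n}(y)\left(u_{n}(x)-u_{n}(y)\right)=\left(u_{n}(x)-u_{n}(y)\right)^{2}-u_{n}(x)\left(u_{n}(x)-u_{n}(y)\right)$ together with the sign information $\eta(x)-\eta(y)=\eta(x)-1\leq 0$ for $x\in A$, $y\in B_{2r}^{c}$, to discard the squared term (it contributes with a favorable sign) and thereby trade the bad standalone factor $u_{n}(y)$, $y\in B_{2r}^{c}$, for the good one $u_{n}(x)$, $x\in A$. Only then does it pass to absolute values, exchange the order of integration, and invoke $(K_{1})$ (which you assert is not needed --- another symptom of the misreading) to bring the expression into the form covered by Lemma \ref{lem34}. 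This sign manipulation is the whole content of the lemma; without it you have proved a different, easier statement than the one the paper needs in Proposition \ref{prop37}.
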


\begin{proof}
$$
\begin{array}{ll}
&-\int_{B_{2r}^{c}}\int_{A}u_{n}(y)(u_{n}(x)-u_{n}(y))(\eta(x)-\eta(y))K(x-y)dxdy\\
&=\int_{B_{2r}^{c}}\int_{A}(u_{n}(x)-u_{n}(y))^{2}(\eta(x)-\eta(y))K(x-y)dxdy \\
&-\int_{B_{2r}^{c}}\int_{A}u_{n}(x)(u_{n}(x)-u_{n}(y))(\eta(x)-\eta(y))K(x-y)dxdy \\
&  = \int_{B_{2r}^{c}}\int_{A}(u_{n}(x)-u_{n}(y))^{2}(n(x)-1)K(x-y)dxdy \\
&-\int_{B_{2r}^{c}}\int_{A}u_{n}(x)(u_{n}(x)-u_{n}(y))(n(x)-n(y))K(x-y)dxdy \\
& \leq -\int_{B_{2r}^{c}}\int_{A}u_{n}(x)(u_{n}(x)-u_{n}(y))(n(x)-n(y))K(x-y)dxdy \\
& = -\int_{B_{2r}^{c}}\int_{A}u_{n}(x)(u_{n}(y)-u_{n}(x))(n(y)-n(x))K(x-y)dxdy \\
& \leq \int_{B_{2r}^{c}}\int_{A}|u_{n}(x)||u_{n}(y)-u_{n}(x)||n(y)-n(x)|K(x-y)dxdy \\
& =\int_{A}\int_{B_{2r}^{c}}|u_{n}(x)||u_{n}(y)-u_{n}(x)||n(y)-n(x)|K(y-x)dydx \\
& \leq \frac{K_{1}}{r}||u_{n}||_{L^{2}(A)}[u_{n},u_{n}]^{\frac{1}{2}} + K_{2}||u_{n}||_{L^{2}(A)}[u_{n},u_{n}]^{\frac{1}{2}}.
\end{array}
$$
In the last inequality, we have used the Lemma \ref{lem34} and $(K_{1})$.
\end{proof}

We will prove that the functional $J$ satisfies the Palais-Smale condition. The next proposition ensures the existence of solution in the level $c$ for the auxiliary problem (see \ref{def35}). We emphasize that by a nonlocal difficulty, we can not repeat the same arguments used in \cite{alv} to show that the functional energy associated at the auxiliary problem satisfies the Palais-Smale condition, therefore we use another technique to show this.
 
  \begin{proposition}\label{prop37}
  Suppose that $f$ and $V$ satisfy $(V_{1})$, $(f_{1})-(f_{3})$. Then the functional $J$ satisfies the Palais-Smale condition.
  \end{proposition}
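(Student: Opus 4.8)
The plan is to show that an arbitrary Palais--Smale sequence $\{u_n\}$ for $J$ possesses a strongly convergent subsequence in $E$. By Lemma \ref{lem31} this sequence is bounded, so after passing to a subsequence I may assume $u_n\rightharpoonup u$ in $E$; since $X\hookrightarrow H^s(\mathbb{R}^n)$ and the fractional embeddings are compact on bounded domains for exponents in $[2,2^\ast_s)$, I also get $u_n\to u$ in $L^p_{loc}(\mathbb{R}^n)$ and $u_n\to u$ almost everywhere. Passing to the limit in $J'(u_n)\varphi=o(1)$ for $\varphi\in C_0^\infty(\mathbb{R}^n)$ (weak convergence in the bilinear form, local compactness and the subcritical growth $(f_1)$ in the term with $g$) shows that $J'(u)=0$, so $u$ solves the auxiliary problem.

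The heart of the proof is a uniform estimate of the tails, and this is where the nonlocality forces the machinery of Lemmas \ref{lem32}--\ref{ref36}. Fix $\epsilon>0$, take $r>R$, set $A=\{r<|x|<2r\}$ and let $\eta$ be the cutoff introduced before Lemma \ref{lem32}. Since $J'(u_n)\to0$ in $E'$ and $\{\eta u_n\}$ is bounded in $E$ (the Lipschitz bound on $\eta$ together with $(K_3)$ controls $[\eta u_n,\eta u_n]$), testing with $\eta u_n$ gives
\begin{equation*}
[u_n,\eta u_n]+\int_{\mathbb{R}^n}V(x)\eta u_n^2\,dx-\int_{\mathbb{R}^n}g(x,u_n)\eta u_n\,dx=o(1).
\end{equation*}
As $\mathrm{supp}\,\eta\subset B_r^c\subset\{|x|>R\}$, property $(2)$ of $g$ yields $g(x,u_n)u_n\le \tfrac{V(x)}{k}u_n^2$ there, and because $k=\tfrac{2\theta}{\theta-2}>2$ the coefficient $1-\tfrac1k$ is positive, so
\begin{equation*}
[u_n,\eta u_n]+\Big(1-\tfrac1k\Big)\int_{\mathbb{R}^n}V(x)\eta u_n^2\,dx\le o(1).
\end{equation*}

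To exploit this I would bound $[u_n,\eta u_n]$ from below. The integrand vanishes on $B_r\times B_r$, so by the splittings \eqref{eq:1}, \eqref{eq:2} and \eqref{eq:3} it decomposes into the manifestly nonnegative block $\int_{B_{2r}^c}\int_{B_{2r}^c}(u_n(x)-u_n(y))^2K\,dxdy$ over $B_{2r}^c\times B_{2r}^c$ (where $\eta\equiv1$) plus precisely the mixed and annular pieces estimated in Lemmas \ref{lem32}, \ref{lem33}, \ref{lem34}, \ref{lem35} and \ref{ref36}. Feeding these in and using $V\ge\Lambda$ on $B_r^c$ from $(V_3)$ to write $\|u_n\|_{L^2(A)}^2\le\tfrac1\Lambda\int_A Vu_n^2$, the two displays combine to
\begin{equation*}
\int_{B_{2r}^c}\!\int_{B_{2r}^c}(u_n(x)-u_n(y))^2K\,dxdy+\Big(1-\tfrac1k\Big)\int_{B_{2r}^c}Vu_n^2\,dx\le o(1)+\epsilon+C\|u_n\|_{L^2(A)}.
\end{equation*}
The step I expect to be the main obstacle is that the constant $K_2$ of Lemmas \ref{lem34}--\ref{ref36} carries no factor $1/r$, so the annular error $\|u_n\|_{L^2(A)}$ cannot be made small for fixed $r$. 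I would remove it by an iterated limit: writing $a_n(\rho)=\int_{B_\rho^c}Vu_n^2$, one has $\|u_n\|_{L^2(A)}^2\le\tfrac1\Lambda(a_n(r)-a_n(2r))$, and since $b(\rho)=\limsup_n a_n(\rho)$ is nonincreasing and bounded it converges as $\rho\to\infty$; hence the annular differences tend to $0$ and, taking $\limsup_n$ then $r\to\infty$, the error disappears, giving the uniform tail bound $\lim_{r\to\infty}\limsup_n\big(\int_{B_{2r}^c}\int_{B_{2r}^c}(u_n(x)-u_n(y))^2K\,dxdy+\int_{B_{2r}^c}Vu_n^2\,dx\big)=0$.

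Finally, to get strong convergence I would compute $(J'(u_n)-J'(u))(u_n-u)=\|u_n-u\|^2-\int(g(x,u_n)-g(x,u))(u_n-u)$, whose left-hand side is $o(1)$ because $J'(u_n)\to0$, $J'(u)=0$ and $\{u_n-u\}$ is bounded. Splitting the nonlinear remainder over $B_{2r}$ and $B_{2r}^c$, the part on the ball tends to $0$ by local compactness and $(f_1)$, while the part on $B_{2r}^c$ is dominated by $\tfrac1k\int_{B_{2r}^c}V(u_n^2+u^2)$ and a Cauchy--Schwarz cross term, all controlled by the uniform tail estimate together with the absolute continuity of $\int Vu^2$. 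Letting $n\to\infty$ and then $r\to\infty$ forces $\|u_n-u\|\to0$, which is the Palais--Smale condition.
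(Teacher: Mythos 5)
Your overall strategy is the same as the paper's: test $J'(u_n)$ against $\eta u_n$, split the bilinear form via the decompositions \ref{eq:1}--\ref{eq:3}, control the mixed and annular blocks with Lemmas \ref{lem32}--\ref{ref36}, use property $(2)$ of $g$ to absorb $\int g(x,u_n)\eta u_n\,dx$ into $\bigl(1-\tfrac1k\bigr)\int V\eta u_n^2\,dx$, and deduce strong convergence from the resulting uniform tail estimate. You also correctly identify the crux: the constant $K_2$ in Lemmas \ref{lem34}--\ref{ref36} carries no factor $1/r$, so the error $C\|u_n\|_{L^2(A)}$ cannot be made small merely by enlarging $r$. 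The problem is your way of removing it. You claim that, since $b(\rho)=\limsup_n a_n(\rho)$ is nonincreasing, bounded and hence convergent, the quantities $\limsup_n\bigl(a_n(r)-a_n(2r)\bigr)$ tend to $0$ as $r\to\infty$. This inference is false: the $\limsup$ of a difference is not controlled by the difference of the $\limsup$s; one only has $\limsup_n\bigl(a_n(r)-a_n(2r)\bigr)\le\limsup_n a_n(r)-\liminf_n a_n(2r)$, and the right-hand side need not become small. Concretely, if for every $j$ there are infinitely many indices $n$ for which all the $V$-mass of $u_n$ sits in the dyadic annulus $B_{2^{j+1}}\setminus B_{2^{j}}$, then $b$ is constant (so certainly convergent), yet $\limsup_n\bigl(a_n(r)-a_n(2r)\bigr)$ stays bounded away from $0$ for every dyadic $r$. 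So the "iterated limit" step, which is exactly the step your whole tail estimate hinges on, is a genuine gap --- not because the conclusion is false (it is true for Palais--Smale sequences), but because your argument for it is invalid.

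The repair is already contained in your own first paragraph, and it is precisely what the paper does: the annulus $A$ is bounded, so the embedding of $E$ into $L^{2}(A)$ is compact and $\|u_n\|_{L^2(A)}\to\|u\|_{L^2(A)}$; since $u\in L^{2}(\mathbb{R}^n)$, one may in addition choose $r$ large enough that $\|u\|_{L^2(A)}$ is small (this is inequality \ref{eq:6} in the paper). For such a fixed $r$ the annular error is then small plus $o_n(1)$, and the tail estimate \ref{eq:4}--\ref{eq:5} holds for all large $n$. Alternatively, your monotonicity idea can be salvaged by first extracting a diagonal subsequence along which $a_n(2^{j})$ converges for every $j$: along that subsequence the annular differences do tend to $0$ as $j\to\infty$, which suffices since the Palais--Smale condition only asks for a convergent subsequence. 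With that single step fixed, the remainder of your argument --- the limit $\int g(x,u_n)u_n\,dx\to\int g(x,u)u\,dx$, or equivalently your computation of $(J'(u_n)-J'(u))(u_n-u)$ --- goes through and recovers the paper's conclusion.
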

\begin{proof} 
By Lemma \ref{lem31} the Palais-Smale sequence $\left\{u_{n}\right\}_{n \in \mathbb{N}}$ is bounded. We can suppose that $\left\{u_{n}\right\}_{n \in \mathbb{N}}$ converges weakly to $u$. 
By $K$ properties we have that $\eta u_{n} \in X$ and $||\eta u_{n}|| \leq ||u_{n}||$ (see Lemma $5.1$ in \cite{dpv}). Then, the sequence $\left\{\eta u_{n}\right\}$ is bounded in $X$. Therefore $I^{'}(u_{n})(\eta u_{n})=o_{n}(1)$. That is
$$
\begin{array}{ll}
&[u_{n},\eta u_{n}]+\int_{\mathbb{R}^{n}}V(x)u_{n}^{2}\eta dx = \int_{\mathbb{R}^{n}}g(x,u_{n})\eta u_{n}dx+ o_{n}(1).
\end{array}
$$
But, note that $[u_{n},\eta u_{n}] = [u_{n},\eta u_{n}]_{C(B_{r}\times B_{r})}$, because $\eta=0$ in $B_{r}$. By \ref{eq:1}, \ref{eq:2} and \ref{eq:3} we have:
$$
\begin{array}{ll}
&[u_{n},\eta u_{n}]_{A \times \mathbb{R}^{n}}+[u_{n},\eta u_{n}]_{B_{2r}^{c}\times A}+[u_{n},\eta u_{n}]_{B_{2r}^{c}\times B_{2r}^{c}}\\
&+[u_{n},\eta u_{n}]_{B_{2r}^{c}\times B_{r}}+[u_{n},\eta u_{n}]_{B_{r}\times B_{2r}^{c}}+[u_{n},\eta u_{n}]_{B_{r}\times A}\\
&+\int_{\mathbb{R}^{n}}V(x)u_{n}^{2}\eta dx = \int_{\mathbb{R}^{n}}g(x,u_{n})\eta u_{n}dx+ o_{n}(1)
\end{array}
$$
By Lemma \ref{lem32},
$$
\begin{array}{ll}
&[u_{n},\eta u_{n}]_{A \times \mathbb{R}^{n}}+[u_{n},\eta u_{n}]_{B_{2r}^{c}\times A}+\int_{\mathbb{R}^{n}}V(x)u_{n}^{2}\eta dx \\
&\leq  \int_{\mathbb{R}^{n}}g(x,u_{n})\eta u_{n}dx+ \int_{B_{r}}\int_{B_{2r}^{c}}u_{n}(y)^{2}K(x-y)dxdy-[u_{n},\eta u_{n}]_{B_{r}\times A}+o_{n}(1)
\end{array}
$$
Above, we have used that $[u_{n},\eta u_{n}]_{B_{2r}^{c}\times B_{2r}^{c}} =[u_{n}, u_{n}]_{B_{2r}^{c}\times B_{2r}^{c}} \geq 0$. Because $\eta = 1$ in $B_{2r}^{c}$.
If $C$ and $D$ are subsets of $\mathbb{R}^{n}$ and $u \in E$, then
$$
\begin{array}{ll}
[u, \eta u]_{C \times D}&=\int_{C}\int_{D}(u(x)-u(y))(\eta u(x)-\eta u(y))K(x-y)dxdy \\
&=\int_{C}\int_{D}\eta(x)(u(x)-u(y))^{2}K(x-y)dxdy\\
& + \int_{C}\int_{D}u(y)(u(x)-u(y))(\eta(x)-\eta(y))K(x-y)dxdy.
\end{array}
$$
Thereby,
$$
\begin{array}{ll}
&\int_{A}\int_{\mathbb{R}^{n}}\eta(x)(u_{n}(x)-u_{n}(y))^{2}K(x-y)dxdy+\\&+\int_{B_{2r}^{c}}\int_{A}\eta(x)(u_{n}(x)-u_{n}(y))^{2}K(x-y)dxdy +\int_{\mathbb{R}^{n}}V(x)u_{n}^{2}\eta dx \\
& \leq  \int_{\mathbb{R}^{n}}g(x,u_{n})\eta u_{n}dx+ \int_{B_{r}}\int_{B_{2r}^{c}}u_{n}(y)^{2}K(x-y)dxdy-[u_{n},\eta u_{n}]_{B_{r}\times A} \\
& -\int_{A}\int_{\mathbb{R}^{n}}u_{n}(y)(u_{n}(x)-u_{n}(y))(\eta(x)-\eta(y))K(x-y)dxdy \\
& -\int_{B_{2r}^{c}}\int_{A}u(y)(u_{n}(x)-u_{n}(y))(\eta(x)-\eta(y))K(x-y)dxdy+o_{n}(1).
\end{array}
$$
From Lemmas \ref{lem34}, \ref{lem35} and \ref{ref36}, we obtain constants $K_{1},K_{2}>0$ such that
$$
\begin{array}{ll}
&\int_{\mathbb{R}^{n}}V(x)u_{n}^{2}\eta dx \\
&\leq \int_{A}\int_{\mathbb{R}^{n}}\eta(x)(u_{n}(x)-u_{n}(y))^{2}K(x-y)dxdy\\
&+\int_{B_{2r}^{c}}\int_{A}\eta(x)(u_{n}(x)-u_{n}(y))^{2}K(x-y)dxdy +\int_{\mathbb{R}^{n}}V(x)u_{n}^{2}\eta dx \\
& \leq \int_{\mathbb{R}^{n}}g(x,u_{n})\eta u_{n}dx+ \int_{B_{r}}\int_{B_{2r}^{c}}u_{n}(y)^{2}K(x-y)dxdy \\
&+\frac{K_{1}}{r}||u_{n}||_{L^{2}(A)}[u_{n},u_{n}]^{\frac{1}{2}} + K_{2}||u_{n}||_{L^{2}(A)}[u_{n},u_{n}]^{\frac{1}{2}}+o_{n}(1).
\end{array}
$$
By $(2)$, $(f_{3})$ and $r>R$ we have
$$
 \int_{\mathbb{R}^{n}}g(x,u_{n})\eta u_{n}dx \leq \frac{1}{k}\int_{\mathbb{R}^{n}}\eta V(x)u_{n}^{2}dx.
$$
Thereby,
$$
\begin{array}{ll}
&\left(1-\frac{1}{k}\right)\int_{\mathbb{R}^{n}}V(x)u_{n}^{2}\eta dx  \\
& \leq \int_{B_{r}}\int_{B_{2r}^{c}}u_{n}(y)^{2}K(x-y)dxdy \\
&+\frac{K_{1}}{r}||u_{n}||_{L^{2}(A)}[u_{n},u_{n}]^{\frac{1}{2}} + K_{2}||u_{n}||_{L^{2}(A)}[u_{n},u_{n}]^{\frac{1}{2}}+o_{n}(1)
\end{array}
$$
By Lemma \ref{lem31}, there is $C_{1}>0$ such that $||u_{n}||\leq C_{1}$. Then,
for some constant $C>0$
\begin{equation}\label{eq:4}
\begin{array}{ll}
&\left(1-\frac{1}{k}\right)\int_{|x|>2r}V(x)u_{n}^{2}dx  \\
& \leq \int_{B_{r}}\int_{B_{2r}^{c}}u_{n}(y)^{2}K(x,y)dxdy +C(\frac{1}{r}+1)||u_{n}||_{L^{2}(A)}+o_{n}(1)
\end{array}
\end{equation}
Let $\epsilon>0$. By Lemma \ref{lem33}, we can take $r$, large enough, such that
\begin{equation}\label{eq:5}
\begin{array}{ll}
\int_{|x|>2r}V(x)u_{n}^{2}dx \leq \frac{\epsilon (k-1)}{3k}+C(\frac{1}{r}+1)||u_{n}||_{L^{2}(A)} +o_{n}(1)
\end{array}
\end{equation}
for all $n \in \mathbb{N}$. Also, we can assume that
\begin{equation}\label{eq:6}
||u||_{L^{2}(A)}< \frac{\epsilon (k-1)}{6 C(\frac{1}{r}+1)k}
\end{equation}
By property $(2)$ of $g$
$$
g(x,u_{n})u_{n} \leq \frac{V(x)}{k}u_{n}^{2}.
$$
for all $x$, with $|x|>2r>R$.
Therefore, by \ref{eq:4}
$$
\int_{|x|>2r}g(x,u_{n})u_{n}dx \leq \frac{\epsilon}{3}+C\left(\frac{1}{r}+1\right)\frac{k}{k-1}||u_{n}||_{L^{2}(A)} +o_{n}(1). 
$$
By \ref{eq:5}, \ref{eq:6} and compact embedding of the fractional Sobolev spaces, we can take $n_{1}\in \mathbb{N}$ such that if $n>n_{1}$ then
$$
\int_{|x|>2r}g(x,u_{n})u_{n}dx \leq \frac{5\epsilon}{6}. 
$$
Note that, we can suppose that $r>0$ satisfies
$$
\int_{|x|>2r}g(x,u)udx \leq \frac{\epsilon}{12}. 
$$
By compact embedding of fractional sobolev spaces we have that, there is, $n_{0} \in \mathbb{N}$ with $n_{0}>n_{1}$ and if $n>n_{0}$ then
$$
\left|
\int_{|x|\leq 2r}g(x,u_{n})u_{n}dx - \int_{|x|\leq 2r}g(x,u)udx\right|< \frac{\epsilon}{12}.
$$
Thereby, for $n>n_{0}$ we have
$$
\left|
\int_{\mathbb{R}^{n}}g(x,u_{n})u_{n}dx - \int_{\mathbb{R}^{n}}g(x,u)udx\right|< \epsilon
$$
that is
$$
\lim\limits_{n \rightarrow \infty}\int_{\mathbb{R}^{n}}g(x,u_{n})u_{n}dx = \int_{\mathbb{R}^{n}}g(x,u)udx
$$
From $I'(u_{n})u_{n}=o_{n}(1)$, we conclude that $||u_{n}|| \rightarrow ||u||$ and therefore $\left\{u_{n}\right\}$ converges to $u$ in $X$.
\end{proof}
\begin{corollary}\label{cor38}
Suppose $(V_{1})$, $(f_{1})-(f_{3})$. Then, there is $u \in X$ such that
$J(u)=c$ and $J'(u)=0$. Moreover, $u\geq 0$ almost everywere in $\mathbb{R}^{n}$.
\end{corollary}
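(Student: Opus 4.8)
The plan is to read off the solution directly from the Palais--Smale machinery already in place and then to upgrade it to a nonnegative function by testing the equation against the negative part of $u$.

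\emph{Existence of a critical point at level $c$.} Since $J$ has the mountain pass geometry, the sequence $\{u_{n}\}$ from \ref{def34} satisfies $J(u_{n})\to c$ and $J'(u_{n})\to 0$. By Proposition \ref{prop37}, $J$ satisfies the Palais--Smale condition, so along a subsequence $u_{n}\to u$ strongly in $X$ for some $u\in X$. Because $J\in C^{1}(X,\mathbb{R})$, passing to the limit gives $J(u)=c$ and $J'(u)=0$; in particular, since $c>0$, $u\not\equiv 0$.

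\emph{Nonnegativity.} Write $u=u^{+}-u^{-}$ with $u^{-}=\max\{-u,0\}\geq 0$. Since $t\mapsto t^{-}$ is $1$-Lipschitz, $|u^{-}(x)-u^{-}(y)|\leq|u(x)-u(y)|$, whence $[u^{-},u^{-}]\leq[u,u]<\infty$ and $\int_{\mathbb{R}^{n}}V(x)(u^{-})^{2}dx\leq\int_{\mathbb{R}^{n}}V(x)u^{2}dx<\infty$; thus $u^{-}\in E$ is an admissible test function. From $J'(u)u^{-}=0$ I would write
$$
[u,u^{-}]+\int_{\mathbb{R}^{n}}V(x)\,u\,u^{-}dx=\int_{\mathbb{R}^{n}}g(x,u)u^{-}dx.
$$
For the nonlocal term I would use the elementary pointwise inequality $(a-b)(a^{-}-b^{-})\leq-(a^{-}-b^{-})^{2}$, valid for all $a,b\in\mathbb{R}$, to obtain $[u,u^{-}]\leq-[u^{-},u^{-}]\leq 0$. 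For the potential term $u\,u^{-}=-(u^{-})^{2}$, so $\int V\,u\,u^{-}dx=-\int V(u^{-})^{2}dx$. For the right-hand side I would invoke $(f_{3})$ and the definition of $g$: on $\{u<0\}$ one has $f(u)=0$, so $g(x,u)=0$ for $|x|\leq R$, while for $|x|>R$ the inequality $kf(u)=0>V(x)u$ places us in the second branch of $\tilde f$, giving $g(x,u)=\frac{V(x)}{k}u$. Hence
$$
\int_{\mathbb{R}^{n}}g(x,u)u^{-}dx=-\frac{1}{k}\int_{|x|>R}V(x)(u^{-})^{2}dx\leq 0.
$$
Substituting these three facts into the identity yields
$$
0\geq[u,u^{-}]=\int_{|x|\leq R}V(x)(u^{-})^{2}dx+\Big(1-\frac{1}{k}\Big)\int_{|x|>R}V(x)(u^{-})^{2}dx\geq 0,
$$
so every term vanishes. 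Since $k=\frac{2\theta}{\theta-2}>1$ and $V(x)>0$ by $(V_{1})$, this forces $\int_{\mathbb{R}^{n}}V(x)(u^{-})^{2}dx=0$, hence $u^{-}=0$ almost everywhere, i.e. $u\geq 0$ a.e.

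\emph{Main obstacle.} The existence half is immediate once Proposition \ref{prop37} is granted, so the only genuine point to verify is the sign bookkeeping for the nonlinear term: one must check, directly from the piecewise definitions of $\tilde f$ and $g$ together with $(f_{3})$, that $g(x,t)\leq 0$ whenever $t<0$, which is precisely what makes $\int g(x,u)u^{-}dx\leq 0$. The nonlocal inequality for $u^{-}$ replaces the usual computation $\int\nabla u\cdot\nabla u^{-}\leq 0$ of the local setting and is the only place where the integro-differential structure of $-\mathcal{L}_{K}$ enters the argument.
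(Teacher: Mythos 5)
Your proof is correct and follows essentially the same route as the paper: existence comes from the mountain-pass Palais--Smale sequence \ref{def34} together with Proposition \ref{prop37}, and nonnegativity comes from testing $J'(u)=0$ with $u^{-}$, using the pointwise inequality for the nonlocal term and the sign structure of $g$ forced by $(f_{3})$ on the set where $u<0$. The only difference is cosmetic and lies in the last step: the paper squeezes until $[u,u^{-}]=0$ and then cites Lemma 4.1 of \cite{rm} to deduce $u^{-}=0$, whereas you conclude directly from $\int_{\mathbb{R}^{n}}V(x)(u^{-})^{2}\,dx=0$ and $(V_{1})$, which makes your ending self-contained.
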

\begin{proof}
By \ref{def34} and Proposition \ref{prop37}, there is $u \in X$ such that $J(u)=c$ and $J'(u)=0$. Let $A=\left\{x \in \mathbb{R}^{n};|x|>R;\right\} \cap \left\{x \in \mathbb{R}^{n}; u(x)<0;\right\}$. If $x \in A$, then $g(x,u(x))=\frac{V(x)}{k}u(x)$ and in if $x \in A^{c}$, then $g(x,u)\geq 0$. We have
$$
0\geq [u,u^{-}]+\int_{A^{c}}V(x)uu^{-} = \left(\frac{1}{k}-1\right)\int_{A}V(x)uu^{-} + \int_{A^{c}}g(x,u)u^{-}dx \geq 0
$$
where $u^{-}(x)=\max\left\{-u(x),0\right\}$. Then
$$
[u,u^{-}]=0
$$
This implies that $u^{-}=0$ (see proof of Lemma 4.1 in \cite{rm}).
\end{proof}
As a consequence of inequalities \ref{eq:7} and \ref{eq:8} we have the following proposition.
\begin{proposition}\label{prop39}
If $V$ and $f$ satisfies $(V_{1}),(V_{2}), (f_{1})-(f_{3})$, then the solution $u$ of the auxiliary problem satisfies
$$
||u||^{2} \leq 2kd 
$$
uniformly in $R>0$.
\end{proposition}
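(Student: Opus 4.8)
The plan is to read the bound straight off the two inequalities flagged immediately before the statement: \ref{eq:7}, which asserts $c\leq d$ uniformly in $R$, and \ref{eq:8}, which asserts
$$
|J(u)|+|J'(u)u|\geq \left(\frac{\theta-2}{4\theta}\right)\|u\|^{2}
$$
for every $u\in E$. The only additional input I need is the existence statement of Corollary \ref{cor38}: the auxiliary problem has a solution $u$ with $J(u)=c$ and $J'(u)=0$.

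First I would substitute this particular $u$ into \ref{eq:8}. Since $J'(u)=0$ in the dual, in particular $J'(u)u=0$, and since $c>0$ we have $|J(u)|=|c|=c$. Hence \ref{eq:8} collapses to
$$
\left(\frac{\theta-2}{4\theta}\right)\|u\|^{2}\leq |J(u)|+|J'(u)u|=c.
$$

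Next I would unwind the constant. Because $k=\frac{2\theta}{\theta-2}$, one has $\frac{4\theta}{\theta-2}=2k$, so the previous line rearranges to $\|u\|^{2}\leq 2kc$. Finally I would invoke \ref{eq:7} to replace $c$ by $d$, obtaining $\|u\|^{2}\leq 2kc\leq 2kd$, which is the claimed estimate.

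The point to check — rather than a genuine obstacle — is the uniformity in $R$. This is automatic: the coefficient $\frac{\theta-2}{4\theta}$ in \ref{eq:8} depends only on $\theta$, the constant $k$ depends only on $\theta$, and the bound $c\leq d$ of \ref{eq:7} holds uniformly in $R$ with $d$ depending only on $V_{\infty}$, $\theta$ and $f$. Since none of these quantities carries an $R$-dependence, the final inequality $\|u\|^{2}\leq 2kd$ holds uniformly in $R>0$.
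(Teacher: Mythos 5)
Your proposal is correct and is exactly the argument the paper intends: the paper derives Proposition \ref{prop39} as an immediate consequence of \ref{eq:7} and \ref{eq:8} applied to the critical point $u$ of Corollary \ref{cor38}, using $J'(u)u=0$, $J(u)=c$, and $\frac{\theta-2}{4\theta}=\frac{1}{2k}$. Your remark on uniformity in $R$ (via $d$ depending only on $V_{\infty}$, $\theta$ and $f$) matches the paper's reasoning as well.
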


\section{$L^{\infty}$ estimative for solution of auxiliary problem}
In this section, we will prove a Brezis-Kato estimative. We will prove that, admitting some hypothesis, there is $M>0$ such that the solution of the problem 
$$
\left\{\begin{array}{rlll}
	-\mathcal{L}_{k}v+b(x)v&=g(x,v)&in&\mathbb{R}^{n} \\
\end{array}\right.
$$
satisfies $||u||_{L^{\infty}(\mathbb{R}^{n})}\leq M$ and $M$ does not depend on $||u||$ (see Proposition \ref{prop45}). We emphasize that, in the best of our knowlegends, this result is being presented  for the first time in the literature. In \cite{alv2}, the authors have shown this result when the operator  $\mathcal{L}_{K}$ is the fractional Laplacian operator, that is, when $K(x)=\frac{C_{n,s}}{2}|x|^{-n-2s}$. But, in our case, we can not use the same technique used in \cite{alv2}, because we do not have a version of the $s$-harmonic extension for more general integro-differential operators. Therefore, we present an another technique.

\begin{remark}\label{rem41}
Let $\beta>1$. Define the real functions
$$
m(x):= (\lambda-1)(x^{\beta}+x^{-\beta})-\lambda(x^{\beta-1}+x^{1-\beta})+2
$$
and
$$
p(x):= (\lambda-1)(x^{\beta}+x^{-\beta})+\lambda(x^{\beta-1}+x^{1-\beta})-2.
$$
where $\lambda:=\frac{\beta^{2}}{2\beta-1}$. Then $m(x)\geq0$ and $p(x)\geq0$ for all $x> 0$.
\end{remark}We will omit the proof of the claim that appears in the Remark \ref{rem41}. 

Let $\beta>1$ and define
 $$
f(x)=x|x|^{2(\beta-1)}
$$
and
 $$
g(x)=x|x|^{\beta-1}.
$$
The functions $f$ and $g$ are continuous and differentiable for all $x \in \mathbb{R}$.   Consider $x,y\in \mathbb{R}$ with $x \neq y$. By Mean Value Theorem, there are $\theta_{1}(x,y)$, $\theta_{2}(x,y) \in \mathbb{R}$ such that
\begin{equation}\label{eq:12}
f^{'}(\theta_{1}(x,y))=\frac{f(x)-f(y)}{x-y}
\end{equation}
and
\begin{equation}\label{eq:13}
g^{'}(\theta_{2}(x,y))=\frac{g(x)-g(y)}{x-y},
\end{equation}
that is
$$
(2\beta-1)|\theta_{1}(x,y)|^{2(\beta-1)}=\frac{x|x|^{2(\beta-1)}-y|y|^{2(\beta-1)}}{x-y}
$$
and
$$
 \beta|\theta_{2}(x,y)|^{(\beta-1)}=\frac{x|x|^{(\beta-1)}-y|y|^{(\beta-1)}}{x-y}.
$$
Implying that
\begin{equation}\label{eq:9}
|\theta_{1}(x,y)|=\left(\frac{1}{2\beta-1}\frac{x|x|^{2(\beta-1)}-y|y|^{2(\beta-1)}}{x-y}\right)^{\frac{1}{2(\beta-1)}}
\end{equation}
and
\begin{equation}\label{eq:10}
|\theta_{2}(x,y)|=\left(\frac{1}{\beta}\frac{x|x|^{(\beta-1)}-y|y|^{(\beta-1)}}{x-y}\right)^{\frac{1}{(\beta-1)}}.
\end{equation}
We consider $\theta_{1}(x,x)=\theta_{2}(x,x)=0$ for all $x \in \mathbb{R}$.
\begin{remark}\label{rem42}
Note that $|\theta_{1}(x,y)|=|\theta_{1}(y,x)|$ and $|\theta_{2}(x,y)| = |\theta_{2}(y,x)|$ for all $x,y \in \mathbb{R}$.
\end{remark}

\begin{lemma}\label{lem43}
With the same notation, if $x \neq 0$ then
$$
|\theta_{1}(x,0)|\geq|\theta_{2}(x,0)|.
$$
\end{lemma}

\begin{proof}
By equalities \ref{eq:9} and \ref{eq:10}, we have
$$
|\theta_{1}(x,0)|=\left(\frac{1}{2\beta-1}\frac{x|x|^{2(\beta-1)}}{x}\right)^{\frac{1}{2(\beta-1)}} = \left(\frac{1}{2\beta-1}\right)^{\frac{1}{2(\beta-1)}}|x|
$$
and
$$
|\theta_{2}(x,0)|=\left(\frac{1}{\beta}\frac{x|x|^{\beta-1}}{x}\right)^{\frac{1}{(\beta-1)}} = \left(\frac{1}{\beta}\right)^{\frac{1}{(\beta-1)}}|x|.
$$
Thereby,
$$
|\theta_{1}(x,0)|\geq|\theta_{2}(x,0)|.
$$
\end{proof}

\begin{lemma}\label{lem44}
If $x,y \in \mathbb{R}$, then
$$
|\theta_{1}(x,y)|\geq|\theta_{2}(x,y)|.
$$
\end{lemma}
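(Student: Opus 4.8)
The plan is to reduce the claimed pointwise inequality to the two algebraic inequalities already recorded in Remark \ref{rem41}. Since $\beta>1$, the map $t\mapsto t^{2(\beta-1)}$ is increasing on $[0,\infty)$, so for $x\neq y$ the desired bound $|\theta_{1}(x,y)|\geq|\theta_{2}(x,y)|$ is equivalent, after raising both nonnegative sides to the power $2(\beta-1)$ and using \ref{eq:9} and \ref{eq:10}, to
$$
\frac{1}{2\beta-1}\,\frac{f(x)-f(y)}{x-y}\;\geq\;\frac{1}{\beta^{2}}\left(\frac{g(x)-g(y)}{x-y}\right)^{2},
$$
with $f(x)=x|x|^{2(\beta-1)}$ and $g(x)=x|x|^{\beta-1}$ as above. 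Because $f$ is strictly increasing, $(f(x)-f(y))(x-y)>0$, so multiplying through by $\beta^{2}(x-y)^{2}$ and recalling $\lambda=\beta^{2}/(2\beta-1)$ reduces everything to the single inequality
$$
\lambda\,\bigl(f(x)-f(y)\bigr)(x-y)\;\geq\;\bigl(g(x)-g(y)\bigr)^{2}. \qquad (\star)
$$

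Next I would dispose of the sign configurations, which is precisely where the two auxiliary functions $m$ and $p$ enter. Both $f$ and $g$ are odd, so each side of $(\star)$ is invariant under $(x,y)\mapsto(-x,-y)$; this collapses the analysis to two genuinely different cases, $x,y$ of the same sign and $x,y$ of opposite signs. The case $x=0$ or $y=0$ is covered by Lemma \ref{lem43} together with Remark \ref{rem42} (equivalently, $(\star)$ then reduces to $\lambda\geq1$, i.e. $(\beta-1)^{2}\geq0$), so I may assume $x,y\neq0$.

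For the same-sign case, say $x,y>0$, one has $f(x)=x^{2\beta-1}$ and $g(x)=x^{\beta}$, so $(\star)$ reads $\lambda(x^{2\beta-1}-y^{2\beta-1})(x-y)\geq(x^{\beta}-y^{\beta})^{2}$; dividing by $x^{\beta}y^{\beta}$ and setting $u=x/y>0$ turns this into exactly $(\lambda-1)(u^{\beta}+u^{-\beta})-\lambda(u^{\beta-1}+u^{1-\beta})+2\geq0$, which is the statement $m(u)\geq0$ of Remark \ref{rem41}. For opposite signs, writing $x>0>y$ and $y=-z$ with $z>0$, oddness puts $(\star)$ in the form $\lambda(x^{2\beta-1}+z^{2\beta-1})(x+z)\geq(x^{\beta}+z^{\beta})^{2}$; dividing by $x^{\beta}z^{\beta}$ and setting $u=x/z>0$ yields precisely $(\lambda-1)(u^{\beta}+u^{-\beta})+\lambda(u^{\beta-1}+u^{1-\beta})-2\geq0$, i.e. $p(u)\geq0$. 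Both are furnished by Remark \ref{rem41}, which completes the proof.

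The main obstacle is bookkeeping rather than any deep difficulty: one must track the absolute values carefully so that the opposite-sign case produces $p$ (the $+\lambda(\cdots)-2$ pattern) while the same-sign case produces $m$ (the $-\lambda(\cdots)+2$ pattern), and verify that the degree-one homogeneity of $|\theta_{1}|$ and $|\theta_{2}|$ makes the single-variable substitution $u=x/y$ (resp. $u=x/z$) legitimate. The one genuinely analytic input, the nonnegativity of $m$ and $p$, is exactly what Remark \ref{rem41} supplies.
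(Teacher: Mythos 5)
Your proof is correct and takes essentially the same route as the paper's: both reduce via \ref{eq:9} and \ref{eq:10} to the algebraic inequality $\lambda\,(f(x)-f(y))(x-y)\geq (g(x)-g(y))^{2}$, divide by $|x|^{\beta}|y|^{\beta}$, and split into the same-sign and opposite-sign cases, which become exactly $m\geq 0$ and $p\geq 0$ from Remark \ref{rem41}. Your use of the oddness of $f$ and $g$ to normalize to positive variables is only a cosmetic streamlining of the paper's bookkeeping with absolute values and the sign factor $\frac{x}{|x|}\frac{y}{|y|}$.
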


\begin{proof}
If $x = 0$ or $y = 0$ then the inequality was proved by Lemma \ref{lem43} and Remark \ref{rem42}. The case $x=y$ is trivial. We can suppose that $x \neq y$, $x \neq 0$ and $y \neq 0$. By equalities \ref{eq:9} and \ref{eq:10} we have that
$$
|\theta_{1}(x,y)|\geq|\theta_{2}(x,y)|
$$
if, and only if
$$
\left(\frac{1}{2\beta-1}\frac{x|x|^{2(\beta-1)}-y|y|^{2(\beta-1)}}{x-y}\right)^{\frac{1}{2(\beta-1)}} \geq \left(\frac{1}{\beta}\frac{x|x|^{\beta-1}-y|y|^{\beta-1}}{x-y}\right)^{\frac{1}{(\beta-1)}}.
$$
This last inequality is true if, and only if
$$
\frac{1}{2\beta-1}\frac{x|x|^{2(\beta-1)}-y|y|^{2(\beta-1)}}{x-y} \geq \frac{1}{\beta^{2}}\left(\frac{x|x|^{\beta-1}-y|y|^{\beta-1}}{x-y}\right)^{2}.
$$
This last inequality occurs if, and only if
$$
\lambda (x-y)(x|x|^{2(\beta-1)}-y|y|^{2(\beta-1)}) \geq \left(x|x|^{\beta-1}-y|y|^{\beta-1}\right)^{2},
$$
that is
$$
\lambda(|x|^{2\beta}-xy|y|^{2(\beta-1)}-yx|x|^{2(\beta-1)}+|y|^{2\beta})\geq |x|^{2\beta}-2xy|x|^{\beta-1}|y|^{\beta-1}+|y|^{2\beta}.
$$
But, we are supposing that $x\neq 0$ and $y\neq 0$, then the last inequality is equivalent to
\begin{equation}\label{eq:11}
\begin{array}{ll}
&\lambda\left[ \left(\frac{|x|}{|y|}\right)^{\beta} - \left(xy\frac{|y|^{\beta-2}}{|x|^{\beta}}\right)-\left(\frac{xy|x|^{\beta-2}}{|y|^{\beta}}\right)+\left(\frac{|y|}{|x|}\right)^{\beta}\right] \\
&\geq \left(\frac{|x|}{|y|}\right)^{\beta} - 2\frac{x}{|x|}\frac{y}{|y|} + \left(\frac{|y|}{|x|}\right)^{\beta}
\end{array}
\end{equation}
We will prove that the inequality \ref{eq:11} is true. If $x\cdot y>0$, then
$$
\begin{array}{ll}
&\lambda\left[ \left(\frac{|x|}{|y|}\right)^{\beta} - \left(xy\frac{|y|^{\beta-2}}{|x|^{\beta}}\right)-\left(\frac{xy|x|^{\beta-2}}{|y|^{\beta}}\right)+\left(\frac{|y|}{|x|}\right)^{\beta}\right] - \left(\frac{|x|}{|y|}\right)^{\beta} + 2\frac{x}{|x|}\frac{y}{|y|} - \left(\frac{|y|}{|x|}\right)^{\beta}
\\& =\lambda\left[ \left(\frac{|x|}{|y|}\right)^{\beta} - \left(\frac{|y|}{|x|}\right)^{\beta-1}-\left(\frac{|x|}{|y|}\right)^{\beta-1}+\left(\frac{|y|}{|x|}\right)^{\beta}\right] - \left(\frac{|x|}{|y|}\right)^{\beta} + 2 - \left(\frac{|y|}{|x|}\right)^{\beta} \\
& = (\lambda-1)\left[\left(\frac{|x|}{|y|}\right)^{\beta}+\left(\frac{|x|}{|y|}\right)^{-\beta}\right]-\lambda\left[\left(\frac{|x|}{|y|}\right)^{\beta-1}+\left(\frac{|x|}{|y|}\right)^{-\beta+1}\right]+2 \\
& = m(\frac{|x|}{|y|}).
\end{array}
$$
If $x \cdot y<0$, then
 $$
\begin{array}{ll}
& \lambda\left[ \left(\frac{|x|}{|y|}\right)^{\beta} - \left(xy\frac{|y|^{\beta-2}}{|x|^{\beta}}\right)-\left(\frac{xy|x|^{\beta-2}}{|y|^{\beta}}\right)+\left(\frac{|y|}{|x|}\right)^{\beta}\right] - \left(\frac{|x|}{|y|}\right)^{\beta} + 2\frac{x}{|x|}\frac{y}{|y|} - \left(\frac{|y|}{|x|}\right)^{\beta}\\
& =\lambda\left[ \left(\frac{|x|}{|y|}\right)^{\beta} + \left(\frac{|y|}{|x|}\right)^{\beta-1}+\left(\frac{|x|}{|y|}\right)^{\beta-1}+\left(\frac{|y|}{|x|}\right)^{\beta}\right]- \left(\frac{|x|}{|y|}\right)^{\beta} - 2 - \left(\frac{|y|}{|x|}\right)^{\beta}\\
& = (\lambda-1)\left[\left(\frac{|x|}{|y|}\right)^{\beta}+\left(\frac{|x|}{|y|}\right)^{-\beta}\right]+\lambda\left[\left(\frac{|x|}{|y|}\right)^{\beta-1}+\left(\frac{|x|}{|y|}\right)^{-\beta+1}\right]-2\\
& = p(\frac{|x|}{|y|}).
\end{array}
$$
By Remark \ref{rem41}, we have that $m(\frac{|x|}{|y|})\geq 0$ and $p(\frac{|x|}{|y|}) \geq 0$. This proves the inequality \ref{eq:11} and the Lemma \ref{lem44}.
\end{proof}

Our main result of this section will establishes an important estimate
involving the $L^{\infty}(\mathbb{R}^{n})$ norm of the solution $u$ of the auxiliary problem. It states that:
\begin{proposition}\label{prop45}
Leq $h \in L^{q}(\mathbb{R}^{n})$ with $q>\frac{n}{2s}$, and $v \in E \subset X$ be a weak solution of
$$
\left\{\begin{array}{rlll}
-\mathcal{L}_{k}v+b(x)v&=g(x,v)&in&\mathbb{R}^{n} \\
\end{array}\right.
$$
where $g$ is a continuous functions satisfying
$$
|g(x,s)|\leq h(x)|s| 
$$
for $s\geq 0$, $b$ is a positive function in $\mathbb{R}^{n}$ and $E$ is definded as in \ref{def21}. Then, there is a constant $M=M(q,||h||_{L^{q}})$ such that
$$
||v||_{\infty}\leq M||v||_{2^{\ast}_{s}}.
$$
\end{proposition}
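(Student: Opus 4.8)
The plan is to run a Moser-type iteration, for which Lemmas \ref{lem43}--\ref{lem44} and Remark \ref{rem41} have prepared exactly the right algebraic inequality. Fix $\beta>1$ and use $\varphi=v|v|^{2(\beta-1)}$ as test function in the weak formulation, writing $w=v|v|^{\beta-1}$ so that $|w|=|v|^{\beta}$. To make $\varphi$ admissible one first truncates, replacing $|v|$ by $\min\{|v|,T\}$ inside the powers, carries out the estimate below with $T$ fixed, and lets $T\to\infty$ by monotone convergence; this keeps every integral finite and is the standard way to legitimize the argument.

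The heart of the matter is to convert the bilinear form $[v,\varphi]$ into a seminorm of $w$. With the notation preceding the lemmas, $f(t)=t|t|^{2(\beta-1)}$ and $g(t)=t|t|^{\beta-1}$, so the mean value identities \ref{eq:12}--\ref{eq:13} give, for each pair $(x,y)$,
$$
(v(x)-v(y))(\varphi(x)-\varphi(y))=(2\beta-1)|\theta_{1}|^{2(\beta-1)}(v(x)-v(y))^{2}
$$
and $(w(x)-w(y))^{2}=\beta^{2}|\theta_{2}|^{2(\beta-1)}(v(x)-v(y))^{2}$, where $\theta_{1}=\theta_{1}(v(x),v(y))$ and $\theta_{2}=\theta_{2}(v(x),v(y))$. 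By Lemma \ref{lem44}, $|\theta_{1}|\geq|\theta_{2}|$, hence the integrand on the left dominates $\frac{2\beta-1}{\beta^{2}}(w(x)-w(y))^{2}$, and integrating against $K$ yields $[v,\varphi]\geq \frac{2\beta-1}{\beta^{2}}[w,w]$. Since $b\geq 0$ and $v\varphi=|v|^{2\beta}\geq 0$, the potential term is nonnegative and may be discarded, while $|g(x,s)|\leq h(x)|s|$ gives $\int g(x,v)\varphi\,dx\leq \int h|v|^{2\beta}\,dx=\int h|w|^{2}\,dx$. Combining these and invoking $(K_{2})$ together with the embedding $X\hookrightarrow L^{2^{\ast}_{s}}$ to bound $[w,w]$ from below by a constant times $\|w\|_{2^{\ast}_{s}}^{2}$, I obtain
$$
\|w\|_{2^{\ast}_{s}}^{2}\leq C\,\frac{\beta^{2}}{2\beta-1}\int_{\mathbb{R}^{n}}h|w|^{2}\,dx.
$$

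Next I estimate the right-hand side by H\"older with exponents $q$ and $q'=q/(q-1)$; since $q>\frac{n}{2s}$ one has $2q'<2^{\ast}_{s}$, so the gain factor $\chi:=2^{\ast}_{s}/(2q')$ satisfies $\chi>1$. Rewriting both sides in terms of $v$ via $\|w\|_{r}^{2}=\|v\|_{r\beta}^{2\beta}$ gives
$$
\|v\|_{\beta 2^{\ast}_{s}}\leq \Bigl(C\,\frac{\beta^{2}}{2\beta-1}\|h\|_{L^{q}}\Bigr)^{1/(2\beta)}\|v\|_{2q'\beta}.
$$
Choosing $\beta_{k}=\chi^{k+1}$, so that $2q'\beta_{k}=\beta_{k-1}2^{\ast}_{s}$ and the exponents telescope, yields $\|v\|_{\chi^{k+1}2^{\ast}_{s}}\leq C_{k}\|v\|_{\chi^{k}2^{\ast}_{s}}$ with $C_{k}=(C\beta_{k}^{2}(2\beta_{k}-1)^{-1}\|h\|_{L^{q}})^{1/(2\beta_{k})}$. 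Because $\beta_{k}$ grows geometrically, $\sum_{k}\beta_{k}^{-1}\log(C'\beta_{k})<\infty$, so the product $\prod_{k}C_{k}$ converges to a finite constant $M$ depending only on $n,s,q$ and $\|h\|_{L^{q}}$. Iterating from $k=0$ and letting $k\to\infty$ gives $\|v\|_{\infty}=\lim_{k}\|v\|_{\chi^{k+1}2^{\ast}_{s}}\leq M\|v\|_{2^{\ast}_{s}}$, which is the claim.

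I expect the main obstacle to be the rigorous justification of the test function together with the bootstrap: one must guarantee, via the truncation, that each quantity $\|v\|_{\chi^{k}2^{\ast}_{s}}$ is finite before it is used at the next stage, so that the iteration is not vacuous, and that the truncated constants pass to the limit uniformly. The purely algebraic core, which in the local case is the elementary inequality relating $|\nabla(v|v|^{\beta-1})|^{2}$ to $\nabla v\cdot\nabla(v|v|^{2(\beta-1)})$, is here supplied cleanly by Lemma \ref{lem44}; the remaining effort is the careful bookkeeping of the constants $C_{k}$ needed to confirm that $M$ is independent of $v$ and of $\|v\|$.
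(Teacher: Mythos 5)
Your strategy coincides with the paper's own proof: the same pair of functions $v|v|^{2(\beta-1)}$ and $v|v|^{\beta-1}$, the same algebraic core (the mean value identities \ref{eq:12}--\ref{eq:13} combined with Lemma \ref{lem44}), the same Sobolev--H\"older step, and the same geometric iteration of exponents with a convergent product of constants. The gap is precisely in the step you defer to the truncation. The pointwise identity
$$
(v(x)-v(y))(\varphi(x)-\varphi(y))=(2\beta-1)|\theta_{1}|^{2(\beta-1)}(v(x)-v(y))^{2}
$$
and the resulting bound $[v,\varphi]\geq\frac{2\beta-1}{\beta^{2}}[w,w]$ are established only for the \emph{untruncated} powers, and for those $\varphi$ is in general not an admissible test function (that is the whole reason for truncating). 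Once you replace $|v|$ by $\min\{|v|,T\}$, the truncated functions $t\mapsto t\min\{|t|,T\}^{2(\beta-1)}$ and $t\mapsto t\min\{|t|,T\}^{\beta-1}$ have corners at $|t|=T$, so the mean value theorem in equality form does not apply across the threshold. Your pointwise inequality is therefore justified only on the region where both $|v(x)|\leq T$ and $|v(y)|\leq T$ (and, trivially, where both exceed $T$, since there both differences are linear in $v(x)-v(y)$). The mixed region --- one argument below the threshold, one above --- is not covered by Lemma \ref{lem44}, and this is exactly what the phrase ``carries out the estimate below with $T$ fixed'' glosses over.

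The paper closes this hole with a separate elementary inequality: with $C=2+\frac{2(\beta-1)^{2}}{2\beta-1}=\frac{2\beta^{2}}{2\beta-1}$, the function $r(s,t)=2(ns-t|t|^{\beta-1})^{2}-C(s-t)(n^{2}s-t|t|^{2(\beta-1)})$ satisfies $r(s,t)\leq 0$ whenever $|s|>n^{\frac{1}{\beta-1}}\geq|t|$ (inequality \ref{eq24}), which gives
$$
2\left[w_{n},w_{n}\right]_{A_{n}\times B_{n}}-C\left[v,v_{n}\right]_{A_{n}\times B_{n}}\leq 0
$$
and hence lets the truncated estimate (the paper's \ref{eq:17}) close to \ref{eq:18}; note that $2/C=\frac{2\beta-1}{\beta^{2}}$, so the cross terms obey the same constant as the diagonal ones. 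Your argument needs this cross-term inequality (or an equivalent case analysis) to be a proof; everything after that point --- discarding the nonnegative potential term, the bound by $\int h|w|^{2}$, H\"older with $q'=q/(q-1)$, the observation that $\chi=2^{\ast}_{s}/(2q')>1$ exactly when $q>\frac{n}{2s}$, and the convergence of $\prod_{k}C_{k}$ --- is correct and matches the paper.
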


\begin{proof}
Let $\beta>1$. For any $n \in \mathbb{N}$ define
$$
A_{n}=\left\{x \in \mathbb{R}^{n}; |v(x)|^{\beta-1}\leq n\right\}
$$
and
$$
B_{n}:=\mathbb{R}^{n}\setminus A_{n}.
$$
Consider
$$
\begin{array}{ll}
f_{n}(t):=&
\left\{\begin{array}{lll}
t|t|^{2(\beta-1)}&se&|t|^{\beta-1}\leq n\\
n^{2}t&se&|t|^{\beta-1}>n
\end{array}
\right.
\end{array}
$$
and
$$
\begin{array}{ll}
g_{n}(t):=&
\left\{\begin{array}{lll}
t|t|^{(\beta-1)}&se&|t|^{\beta-1}\leq n\\
nt&se&|t|^{\beta-1}>n
\end{array}
\right.
\end{array}
$$
Note that $f_{n}$ and $g_{n}$ are continuous functions and they are differentiable with the exception of $n^{\frac{1}{\beta-1}}$ and $-n^{\frac{1}{\beta-1}}$ and its derivatives are limited. Then $f_{n}$ and $g_{n}$ are lipschtz. Therefore, setting 
$$
v_{n}:=f_{n}\circ v
$$
and 
$$w_{n}:=g_{n}\circ v$$ 
we have that $v_{n},w_{n}\in E$.
Note that
$$
\begin{array}{ll}
\left[v,v_{n}\right]&=  \int_{A_{n}}\int_{A_{n}}(v_{n}(x)-v_{n}(y))(v(x)-v(y)K(x-y)dxdy \\ & +\int_{B_{n}}\int_{B_{n}}(v_{n}(x)-v_{n}(y))(v(x)-v(y)K(x-y)dxdy\\ &+2\left[v,v_{n}\right]_{A_{n}\times B_{n}}.
\end{array}
$$
By equality \ref{eq:12}, if $x,y \in A_{n}$ then
$$
v_{n}(x)-v_{n}(y)=f_{n}'(\theta_{1}(x,y))(v(x)-v(y)),
$$
where we are denoting $\theta_{1}(v(x),v(y))$ by $\theta_{1}(x, y)$.  Thereby,
\begin{equation}\label{eq:14}
\begin{array}{ll}
\left[v,v_{n}\right] &= \int_{A_{n}}\int_{A_{n}}(2\beta-1)|\theta_{1}(x,y)|^{2(\beta-1)}(v(x)-v(y))^{2}K(x-y)dxdy\\
 &+n^{2}\int_{B_{n}}\int_{B_{n}}(v(x)-v(y))^{2}K(x-y)dxdy +2\left[v,v_{n}\right]_{A_{n}\times B_{n}}.
\end{array}
\end{equation}
Analogously, by \ref{eq:13}
$$
\begin{array}{ll}
\left[w_{n},w_{n}\right] &=  \int_{A_{n}}\int_{A_{n}}\beta^{2}|\theta_{2}(x,y)|^{2(\beta-1)}(v(x)-v(y))^{2}K(x-y)dxdy\\
 &+n^{2}\int_{B_{n}}\int_{B_{n}}(v(x)-v(y))^{2}K(x-y)dxdy +2\left[w_{n},w_{n}\right]_{A_{n}\times B_{n}},
\end{array}
$$
where we are denoting $\theta_{2}(v(x),v(y))$ by $\theta_{2}(x, y)$. By Lemma \ref{lem44},
$$
\begin{array}{ll}
\left[w_{n},w_{n}\right]  &\leq \int_{A_{n}}\int_{A_{n}}\beta^{2}|\theta_{1}(x,y)|^{2(\beta-1)}(v(x)-v(y))^{2}K(x-y)dxdy\\
 &+n^{2}\int_{B_{n}}\int_{B_{n}}(v(x)-v(y))^{2}K(x-y)dxdy+2\left[w_{n},w_{n}\right]_{A_{n}\times B_{n}}.
\end{array}
$$
This implies that
\begin{equation}\label{eq:15}
\begin{array}{ll}
&\left[w_{n},w_{n}\right] + \int_{\mathbb{R}^{n}}b(x)w_{n}^{2}dx-\left[v,v_{n}\right]- \int_{\mathbb{R}^{n}}b(x)vv_{n}dx\\&\leq  (\beta-1)^{2}\int_{A_{n}}\int_{A_{n}}|\theta_{1}(x,y)|^{2(\beta-1)}(v(x)-v(y))^{2}K(x-y)dxdy\\
& +2\left[w_{n},w_{n}\right]_{A_{n}\times B_{n}}-2\left[v,v_{n}\right]_{A_{n}\times B_{n}}.
\end{array}
\end{equation}
The equation \ref{eq:14} implies that
\begin{equation}\label{eq:16}
\begin{array}{ll}
&\left[v,v_{n}\right]+\int_{\mathbb{R}^{n}}b(x)vv_{n}dx - 2\left[v,v_{n}\right]_{A_{n}\times B_{n}}\\ &\geq (2\beta-1)\int_{A_{n}}\int_{A_{n}}|\theta_{1}(x,y)|^{2(\beta-1)}(v(x)-v(y))^{2}K(x-y)dxdy,
\end{array}
\end{equation}
because $b(x)vv_{n}=b(x)w_{n}^{2}\geq 0$.
Replacing \ref{eq:16} in the inequality \ref{eq:15} we obtain
$$
\begin{array}{ll}
&\left[w_{n},w_{n}\right] + \int_{\mathbb{R}^{n}}b(x)w_{n}^{2}dx-\left[v,v_{n}\right]- \int_{\mathbb{R}^{n}}b(x)vv_{n}dx\\ &\leq \frac{(\beta-1)^{2}}{2\beta-1}\left([v,v_{n}]+\int_{\mathbb{R}^{n}}b(x)vv_{n}dx\right)\\
 &+2\left[w_{n},w_{n}\right]_{A_{n}\times B_{n}}+(-2-\frac{2(\beta-1)^{2}}{2\beta-1})\left[v,v_{n}\right]_{A_{n}\times B_{n}},
\end{array}
$$
that is
$$
\begin{array}{ll}
&\left[w_{n},w_{n}\right] + \int_{\mathbb{R}^{n}}b(x)w_{n}^{2}dx\\ &\leq (\frac{(\beta-1)^{2}}{2\beta-1}+1)\left([v,v_{n}]+\int_{\mathbb{R}^{n}}b(x)vv_{n}dx\right)\\
 &+2\left[w_{n},w_{n}\right]_{A_{n}\times B_{n}}+(-2-\frac{2(\beta-1)^{2}}{2\beta-1})\left[v,v_{n}\right]_{A_{n}\times B_{n}} \\
 &= \frac{\beta^{2}}{2\beta-1}\left([v,v_{n}]+\int_{\mathbb{R}^{n}}bvv_{n}dx\right)\\
 &+2\left[w_{n},w_{n}\right]_{A_{n}\times B_{n}}+(-2-\frac{2(\beta-1)^{2}}{2\beta-1})\left[v,v_{n}\right]_{A_{n}\times B_{n}} \\
 &\leq \beta \int_{\mathbb{R}^{n}}g(x,v)v_{n}dx\\
 &+2\left[w_{n},w_{n}\right]_{A_{n}\times B_{n}}+(-2-\frac{2(\beta-1)^{2}}{2\beta-1})\left[v,v_{n}\right]_{A_{n}\times B_{n}}.
\end{array}
$$
In short,
\begin{equation}\label{eq:17}
\begin{array}{ll}
\left[w_{n},w_{n}\right] &+ \int_{\mathbb{R}^{n}}b(x)w_{n}^{2}dx \leq \beta \int_{\mathbb{R}^{n}}g(x,v)v_{n}dx \\
 &+2\left[w_{n},w_{n}\right]_{A_{n}\times B_{n}}+(-2-\frac{2(\beta-1)^{2}}{2\beta-1})\left[v,v_{n}\right]_{A_{n}\times B_{n}}.
 \end{array}
\end{equation}
But, if $n \in \mathbb{N}$ and 
$$
C=2+\frac{2(\beta-1)^{2}}{2\beta-1},
$$
then a simple calculation shows that the function 
$$
r(s,t)=2(ns-t|t|^{\beta-1})^{2}-C(s-t)(n^{2}s-t|t|^{2(\beta-1)}),
$$
satisfies
\begin{equation}\label{eq24}
r(s,t)\leq 0
\end{equation}
for all $|s|>n^{\frac{1}{\beta-1}}$ and $|t|\leq n^{\frac{1}{\beta-1}}$. Then, taking $s=v(x)$ and $t=v(y)$ for $x\in B_{n}$ and $y \in A_{n}$ and replacing in \ref{eq24} we obtain

$$
2(w_{n}(x)-w_{n}(y))^{2}-C(v(x)-v(y))(v_{n}(x)-v_{n}(y))\leq 0.
$$
Thereby
$$
+2\left[w_{n},w_{n}\right]_{A_{n}\times B_{n}}+(-2-\frac{2(\beta-1)^{2}}{2\beta-1})\left[v,v_{n}\right]_{A_{n}\times B_{n}} \leq 0.
$$
By inequality \ref{eq:17},
\begin{equation}\label{eq:18}
\left[w_{n},w_{n}\right] + \int_{\mathbb{R}^{n}}b(x)w_{n}^{2}dx \leq  \beta \int_{\mathbb{R}^{n}}g(x,v)v_{n}dx.
\end{equation}
Let $S>0$ be the best constant verifying
$$
||u||_{2^{\ast}_{s}}^{2} \leq S||u||_{X}^{2},
$$
for all $u \in X$, that is
$$
S=\sup_{u \in X}\frac{||u||_{2^{\ast}_{s}}^{2}}{||u||_{X}^{2}}.
$$
By inequality \ref{eq:18},
$$
\begin{array}{ll}
\left(\int_{A_{n}}|w_{n}|^{2^{\ast}_{s}}dx\right)^{\frac{2}{2^{\ast}_{s}}}&\leq \left(\int_{\mathbb{R}^{n}}|w_{n}|^{2^{\ast}_{s}}dx\right)^{\frac{2}{2^{\ast}_{s}}}\\
&\leq S||w_{n}||^{2} \\
&  \leq S\beta\int_{\mathbb{R}^{n}}g(x,v(x))v_{n}dx \\
& \leq S\beta\int_{\mathbb{R}^{n}}h(x)w_{n}^{2}dx \\
 & \leq S\beta||h||_{q}||w_{n}||_{\frac{2q}{q-1}}^{2}.
\end{array}
$$
But, we have that $|w_{n}(x)|\leq |v(x)|^{\beta}$ for all $x \in B_{n}$ and $|w_{n}(x)|=|v(x)|^{\beta}$ for all $x \in A_{n}$. Thereby,
$$
\begin{array}{ll}
\left(\int_{A_{n}}|v|^{\beta 2^{\ast}_{s}}dx\right)^{\frac{2}{2^{\ast}_{s}}} \leq   S\beta||h||_{q}\left[\int_{\mathbb{R}^{n}}|v|^{\frac{2q\beta}{q-1}}dx\right]^{\frac{q-1}{q}}.
\end{array}
$$
By Monotone Convergence Theorem,
\begin{equation}\label{eq:19}
||v||_{2^{\ast}_{s}\beta} \leq (\beta S||h||_{q})^{\frac{1}{2\beta}}||v||_{2\beta q_{1}},
\end{equation}
where $q_{1}=\frac{q}{q-1}$.
Define
$$
\eta:=\frac{2^{\ast}_{s}}{2q_{1}}.
$$
and note that $\eta>1$. When $\beta=\eta$ we have that $2\beta q_{1} = 2^{\ast}_{s}$. Then, by \ref{eq:19}
\begin{equation}\label{eq:20}
||v||_{2^{\ast}_{s}\eta} \leq (\eta S||h||_{q})^{\frac{1}{2\eta}}||v||_{2^{\ast}_{s}}.
\end{equation}
Taking $\beta=\eta^{2}$ in \ref{eq:19} we obtain
\begin{equation}\label{eq210}
||v||_{2^{\ast}_{s}\eta^{2}} \leq \eta^{\frac{1}{\eta^{2}}} (S||h||_{q})^{\frac{1}{2\eta^{2}}}||v||_{2^{\ast}_{s}\eta}.
\end{equation}
By inequalities \ref{eq:20} and \ref{eq210} we have
$$
||v||_{2^{\ast}_{s}\eta^{2}} \leq \eta^{\frac{1}{(\eta^{2}}+\frac{1}{2\eta})} (S||h||_{q})^{(\frac{1}{2\eta^{2}}+\frac{1}{2\eta})}||v||_{2^{\ast}_{s}}.
$$
Inductively, we can prove that
$$
||v||_{2^{\ast}_{s}\eta^{m}} \leq \eta^{(\frac{1}{2\eta}+\frac{1}{\eta^{2}}+...+\frac{m}{2\eta^{m}})} (S||h||_{q})^{(\frac{1}{2\eta}+\frac{1}{2\eta^{2}}+...+\frac{1}{2\eta^{m}})}||v||_{2^{\ast}_{s}}
$$
for all $m \in \mathbb{N}$.
But,
$$
\sum_{i=1}^{\infty}\frac{m}{2\eta^{m}}=\frac{1}{2(\eta-1)^{2}}
$$
and
$$
\sum_{i=1}^{\infty}\frac{1}{2\eta^{m}}=\frac{1}{2(\eta-1)}.
$$
Thereby, for all $m>0$ 
$$
||v||_{2^{\ast}_{s}\eta^{m}} \leq \eta^{\frac{1}{2(\eta-1)^{2}}} (S||h||_{q})^{\frac{1}{2(\eta-1)}}||v||_{2^{\ast}_{s}}
$$
Recalling that
$$
||v||_{\infty}=\lim\limits_{n \rightarrow \infty}||v||_{p}
$$
and that $\eta>1$ we have that
$$
||v||_{\infty}\leq M||v||_{2^{\ast}_{s}} 
$$
for
$$
M=\eta^{\frac{1}{2(\eta-1)^{2}}} (S||h||_{q})^{\frac{1}{2(\eta-1)}}
$$
and
$$
\eta = \frac{n(q-1)}{q(n-2s)}
$$
We conclude the proof of Proposition \ref{prop45} noting that $M$ depends only on $q$, $||h||_{q}$.
\end{proof}

\section{Solution for Problem (P)}
 In this section, we prove the main result, the Theorem \ref{thm52}. By Corollary \ref{cor38}, there is $u \in X$ such that $J(u)=c$ and $J'(u)=0$. We have the following estimate for $||u||_{\infty}$.
\begin{lemma}\label{lem51}
The solution $u$ of the auxiliary problem satisfies
$$
||u||_{\infty} \leq M(2Skd)^{\frac{1}{2}}
$$
\end{lemma}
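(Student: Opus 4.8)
The plan is to read the estimate off as a direct consequence of the Brezis--Kato bound of Proposition \ref{prop45} combined with the uniform energy bound of Proposition \ref{prop39}. By Corollary \ref{cor38} the function $u$ is a nonnegative weak solution of the auxiliary problem
$$
-\mathcal{L}_K u + V(x)u = g(x,u)\quad\text{in }\mathbb{R}^n,
$$
so it already has the form required by Proposition \ref{prop45}, with $b(x)=V(x)$ positive by $(V_1)$. It remains only to exhibit an admissible weight $h$ and to control $\|u\|_{2^{\ast}_s}$.

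For the first point I would use the subcritical growth. Since $u\ge 0$ almost everywhere, properties $(1)$ and $(f_3)$ give $0\le g(x,u(x))\le f(u(x))$, and then $(f_1)$ yields
$$
|g(x,u(x))|\le c_0\,u(x)^{p-1}=h(x)\,u(x),\qquad h(x):=c_0\,|u(x)|^{p-2}.
$$
To check $h\in L^{q}(\mathbb{R}^n)$ with $q>\frac{n}{2s}$, I would take $q=\frac{2^{\ast}_s}{p-2}$. Because $p<2^{\ast}_s$ and $\frac{n}{2s}=\frac{2^{\ast}_s}{2^{\ast}_s-2}$, this choice satisfies $q>\frac{n}{2s}$; moreover $(p-2)q=2^{\ast}_s$, so
$$
\|h\|_{L^q}^{\,q}=c_0^{\,q}\int_{\mathbb{R}^n}|u|^{2^{\ast}_s}\,dx<\infty,
$$
as $u\in L^{2^{\ast}_s}(\mathbb{R}^n)$ by the Sobolev embedding. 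Thus Proposition \ref{prop45} applies and produces $M=M(q,\|h\|_{L^q})$ with $\|u\|_\infty\le M\|u\|_{2^{\ast}_s}$.

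Finally I would bound the critical norm. By the fractional Sobolev inequality (with the constant $S$ of Proposition \ref{prop45}), together with $[u,u]\le\|u\|^2$, one has $\|u\|_{2^{\ast}_s}^2\le S\|u\|^2$; and Proposition \ref{prop39} gives $\|u\|^2\le 2kd$ uniformly in $R$. Hence $\|u\|_{2^{\ast}_s}^2\le 2Skd$, and combining with the previous step,
$$
\|u\|_\infty\le M\,\|u\|_{2^{\ast}_s}\le M\,(2Skd)^{1/2},
$$
which is the assertion. The only delicate point is bookkeeping rather than a genuine obstacle: one must confirm that the selected $q$ lies in the admissible range $q>\frac{n}{2s}$ (which is precisely where $p<2^{\ast}_s$ enters) and observe that $\|h\|_{L^q}=c_0\|u\|_{2^{\ast}_s}^{p-2}\le c_0(2Skd)^{(p-2)/2}$ is itself controlled by $d$. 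This last remark is what makes $M$, and therefore the whole estimate, uniform in $R$, a feature that will be essential when letting $R\to\infty$ to recover a solution of the original problem $(P)$.
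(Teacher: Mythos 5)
Your proof is correct and rests on the same pillars as the paper's: Proposition \ref{prop45} applied with the weight $h=c_{0}|u|^{p-2}$, the exponent check $q=\frac{2^{\ast}_{s}}{p-2}>\frac{n}{2s}$ (exactly where $p<2^{\ast}_{s}$ enters), and the chain $\|u\|_{\infty}\leq M\|u\|_{2^{\ast}_{s}}\leq MS^{1/2}\|u\|\leq M(2Skd)^{1/2}$ via Proposition \ref{prop39}. The one genuine difference is how the hypothesis of Proposition \ref{prop45} is verified. The paper does not feed the pair $(V,g)$ into the proposition directly: it first rewrites the auxiliary equation as $-\mathcal{L}_{K}u+b(x)u=H(x,u)$, where on the set $\{|x|\geq R,\ f(u)>\frac{V(x)}{k}u\}$ (truncation active) it sets $H=0$ and absorbs the term $\frac{V(x)}{k}u$ into the potential, $b=\left(1-\frac{1}{k}\right)V$, while elsewhere $H=f(u)$ and $b=V$; the bound $|H(x,u)|\leq c_{0}|u|^{p-1}$ then follows from $(f_{1})$ alone, with no discussion of signs. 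You instead keep $b=V$ and the nonlinearity $g$ as they stand, and bound $0\leq g(x,u)\leq f(u)\leq c_{0}u^{p-1}$ pointwise, which requires property $(1)$ of the truncation together with the nonnegativity $u\geq 0$ from Corollary \ref{cor38}. Both verifications are legitimate: in either case the inequality $|g(x,s)|\leq h(x)|s|$ is only ever invoked along the solution itself, which is all the proof of Proposition \ref{prop45} actually uses (there the nonlinearity is evaluated only at $s=v(x)$). Your route is slightly more direct since it avoids redefining the problem, at the small cost of having to invoke $u\geq0$; the paper's recasting buys a bound on the new nonlinearity $H$ that holds for all arguments without reference to the sign of the solution. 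Your closing remark that $\|h\|_{L^{q}}\leq c_{0}(2Skd)^{(p-2)/2}$, hence $M$ and the final estimate are uniform in $R$, is also the point the paper needs for Theorem \ref{thm52}.
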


\begin{proof}
Consider the functions
$$
\begin{array}{lll}
H(x,t)&=&
\left\{
\begin{array}{lllll}
f(t)&if&|x|<R&or&f(t)\leq \frac{V(x)}{k}t\\
0&if&|x|\geq R&and&f(t)> \frac{V(x)}{k}t
\end{array}
\right.
\end{array}
$$
and
$$
\begin{array}{lll}
b(x)&=&
\left\{
\begin{array}{lllll}
V(x)&if&|x|<R&or&f(u)\leq \frac{V(x)}{k}u\\
\left(1-\frac{1}{k}\right)V(x)&if&|x|\geq R&and&f(u)> \frac{V(x)}{k}u.
\end{array}
\right.
\end{array}
$$
Note that the function $u$ is solution of
$$
\left\{\begin{array}{ll}
&-\mathcal{L}_{k}u+b(x)u=H(x,u)\\
&u \in E.
\end{array}\right.
$$
By $(f_{1})$,
$$
|H(x,t)|\leq c_{0}|t|^{p-1}
$$
for $p \in (2,2^{\ast}_{s})$. Thereby,
$$
|H(x,u)|\leq h(x)|u|,
$$
where $h(x)=C_{0}|u|^{p-2}$ . Note that $h \in L^{\frac{2^{\ast}_{s}}{p-2}}$ with
$$
||h||_{L^{q}(\mathbb{R}^{n})}\leq C(2ksd)^{\frac{p-2}{2^{\ast}_{s}}}.
$$
The number $p$ satisfies
$$
p<2^{\ast}_{s} = 2+\frac{2s}{n}2^{\ast}_{s}.
$$
Then
$$
q =\frac{2^{\ast}_{s}}{p-2}>\frac{n}{2s}.
$$
By Proposition \ref{prop45} and sobolev embedding
$$
||u||_{\infty}\leq M||u||_{2^{\ast}_{s}} \leq MS^{\frac{1}{2}}||u||,
$$
where $M=M(q,||h||_{q})$.
By Proposition \ref{prop39}, we have
\begin{equation}\label{eq535}
||u||_{\infty} \leq M(2kSd)^{\frac{1}{2}}.
\end{equation}
 \begin{theorem}\label{thm52}
Suppose that $V$ satisfies $(V_{1})$-$(V_{2})$ and that $f$ satisfies $(f_{1})$-$(f_{3})$. There is $\Lambda^{\ast}=\Lambda^{\ast}(V_{\infty},\theta,p,c_{0},s)>0$ such that if $\Lambda>\Lambda^{\ast}$ in $(V_{3})$, then the problem $(P)$ has a nonnegative nontrivial solution.
 \end{theorem}
Indeed, let $|x|\geq R$. If $u(x)=0$ then by denfition $f(u(x))=g(x,u(x))$. If $u(x)>0$ then
$$
\begin{array}{ll}
\frac{f(u(x))}{u(x)}&\leq c_{0}|u|^{p-2} \\
&\leq  c_{0}||u||_{\infty}^{p-2} \\
& = \frac{ c_{0}||u||_{\infty}^{p-2}}{\Lambda}\Lambda \\
& \leq \frac{k^{\frac{p}{2}}c_{0}M^{p-2}(2Sd)^{\frac{p-2}{2}}}{\Lambda}\frac{V(x)}{k}
\end{array}
$$
Define
$$
\Lambda^{\ast}=k^{\frac{p}{2}}c_{0}M^{p-2}(2Sd)^{\frac{p-2}{2}}.
$$
If $\Lambda>\Lambda^{\ast}$ then
$$
\frac{f(u(x))}{u(x)} \leq \frac{V(x)}{k}.
$$
By definition of $g$ we have $g(x,u(x))=f(u(x))$. Then $g(x,u(x))=f(u(x))$ for all $x \in \mathbb{R}^{n}$. Therefore, $u$ is a solution nonnegative and nontrivial of problem $(P)$.
\end{proof}
\vspace{1cm}
\begin{bibdiv}
\begin{biblist}

\bib{al}{article}{
     author={Alberti, G.},
     author={Bellettini G.}
     title={A nonlocal anisotropic model for phase transitions. I. The optimal profile problem},
     journal={Math. Ann.},
     volume={310},
     date={1998},
     pages={527-560},
     review={ }
     }
     
     \bib{alv2}{article}{
          author={Alves, C. O.},
          author={Miyagaki, O. H.},
          title={A critical nonlinear fractional elliptic equation with saddle-like potential in $\mathbb{R}^{N}$},
          journal={Journal of Mathematical Physics},
          volume={57},
          date={2016},
          pages={},
          review={ }
          }
          
\bib{alv}{article}{
     author={Alves, C.O.},
     author={Souto, M.A.S.}
     title={
    Existence of solutions for a class of elliptc equations in $\mathbb{R}^{n}$ with vanishing potentials},
     journal={Journal of Differential Equations},
     volume={252},
     date={2012},
     pages={5555-5568},
     review={ }
     }
     
     \bib{vin}{article}{
          author={Ambrosio, V.},
          title={
         A Fractional Landesman-Lazer Type Problem
         set on $\mathbb{R}^{N}$},
          journal={arXiv:1601.06281v1},
          volume={},
          date={},
          pages={},
          review={ }
          } 

     \bib{vincenzo}{article}{
          author={Ambrosio,V.},
          title={Ground state for superlinear fractional schr\"odinger equations in $\mathbb{R}^{N}$},
          journal={Ann. Acad. Sci. Fenn. Math.},
          volume={41},
          date={2016},
          pages={745-756},
          review={ }
          } 
      
       \bib{andrea}{article}{
  	author={Bertozzi, A. L.},
  	author={Garnett, J. B.},
  	author={Laurent, T.},
  	title={Characterization of radially symmetric finite time blowup in multidimensional aggregation equations},
  	journal={SIAM Journal on Mathematical Analysis },
  	volume={44},
  	date={2012},
  	pages={651-681},
  	review={ }
  }  
          
     \bib{bisci}{article}{
          author={Bisci, G.M.},
          author={Radulescu, V.D.},
          title={Ground state solutions of scalar field fractional
          Schr\"odinger equations},
          journal={Calculus of Variations},
          volume={54},
          date={2015},
          pages={2985-3008},
          review={ }
          } 
         
     \bib{bucur}{article}{
          author={Bucur, C.},
          title={Some Observations on the Green Function for the Ball in the Fractional Laplace Framework},
          journal={Comm. on Pure \& App. Anal.},
          volume={15},
          date={2016},
          pages={657-699},
          review={ }
          }
     
\bib{cab}{article}{
          author={Cabré, X.},
          author={Sola-Morales, X.}
          title={Layer solutions in a half-space for boundary reactions},
          journal={Comm. Pure Appl. Math.},
          volume={58},
          date={2005},
          pages={1678-1732},
          review={ }
          }

\bib{caf}{article}{
     author={Caffarelli,L.},
     author={Silvestre L.}
     title={An extension problem related to the fractional Laplacian},
     journal={Communications in partial differential equations},
     volume={},
     date={2007},
     pages={},
     review={ }
     }
     
     \bib{xiaojun}{article}{
          author={Chang , X.},
          title={Ground States of some Fractional Schrödinger Equations on $\mathbb{R}^{N}$},
          journal={
         Proceedings of the Edinburgh Mathematical Society},
          volume={58},
          date={2015},
          pages={305-321},
          review={ }
          }

     \bib{chang}{article}{
          author={Chang, X.},,
          title={Ground state solutions of asymptotically linear fractional Schrödinger equations},
          journal={Journal of Mathematical Physics },
          volume={54},
          date={2013},
          pages={},
          review={ }
          }
           
     \bib{chen}{article}{
          author={Chen, C.},
          title={Infinitely many solutions for fractional schr\"odinger equations in $\mathbb{R}^{N}$},
          journal={Electronic Journal of Diferential Equations},
          volume={88},
          date={2016},
          pages={1-15},
          review={ }
          }
             
  \bib{noncite}{article}{
 author={Cheng, M.},
                    title={Bound state for the fractional Schrödinger equation with unbounded potential},
                    journal={Journal of Mathematical Physics},
                    volume={53},
                    date={2012},
                    pages={
      },
                    review={ }
                    } 
                    
     \bib{pietro}{article}{
          author={d'Avenia, P.},
author={Squassina, M.}
author={Zenari, M.}
          title={Fractional logarithmic Schr\"odinger equations},
          journal={
          Mathematical Methods in the Applied Sciences},
          volume={38},
          date={2015},
          pages={5207-5216},
          review={ }
          }

          \bib{dkp}{article}{
               author={Di Castro, A.},
               author={Kuusi, T.},
               author={Palatucci, G.},
               title={Local behavior of fractional p-minimizers},
               journal={Annales de l'Institut Henri Poincare (C) Non Linear Analysis},
               volume={},
               date={2015},
               pages={},
               review={}
               }
               
\bib{dpv}{article}{
     author={Di Nezza, E.},
     author={Palatucci, G.},
     author={Valdinoci, E.},
     title={Hitchhiker's guide to the fractional Sobolev spaces},
     journal={Bull. Sci. Math.},
     volume={136},
     date={2012},
     pages={512-573},
     review={ }
     }
     
     \bib{noncite}{article}{
          author={Dipierro, S.},
author={Palatucci, G.}
author={Valdinoci, E.}
          title={Existence and symmetry results for a Schr\"odinger type Problem involving the fractional laplacian},
          journal={Le matematiche},
          volume={68},
          date={2013},
          pages={201-216},
          review={ }
          }

     \bib{rm}{article}{
	author={Duarte, R. C.},
	author={Souto, M. A. S.}
	title={Fractional Schr\"odinger-Poisson equations with general nonlinearities},
	journal={Electron. J. Differential Equations},
	volume={319},
	date={2016},
	pages={1-19},
	review={ }
}

     \bib{mou}{article}{
          author={Fall, M. M.},
          author={Valdinoci, E.},
          title={Uniqueness and nondegeneracy of positive solutions of $(-\Delta)^{s}u + u = u^{p}$ in $\mathbb{R}^{N}$ when $s$ is close to $1$},
          journal={Communications in Mathematical Physics},
          volume={329},
          date={2014},
          pages={383-404},
          review={ }
          }
          
     \bib{fel}{article}{
          author={Felmer, P.},
author={Quaas, A.}
author={Tan, J.}
          title={Positive solutions of nonlinear Schrödinger equation with the
          fractional Laplacian.},
          journal={Proc. R. Soc. Edinburgh Sect. A.},
          volume={142},
          date={2012},
          pages={1237-1262},
          review={ }
          } 
           
\bib{fran}{article}{
     author={Franzina, G.},
     author={Palatuci, G}
     title={Fractional p-eigenvalues},
     journal={arXiv:1307.1789.},
     volume={},
     date={},
     pages={},
     review={ }
     }
     
 \bib{gil}{article}{
                author={Gilboa, G.},
                author={Osher, S.}
                title={Nonlocal operators with applications to image processing},
                journal={Multiscale Model. Simul. },
                volume={7},
                date={2008},
                pages={1005-1028.},
                review={ }
                }   

     \bib{tian}{article}{
          author={Gou, T.},
          author={Sun, H.},
          title={Solutions of nonlinear Schr\"odinger equation with fractional Laplacian without the Ambrosetti-Rabinowitz condition},
          journal={Applied Mathematics and Computation},
          volume={},
          date={2014},
          pages={},
          review={ }
          }
          
     \bib{sof}{article}{
          author={Khoutir, S.},
author={Chen, H.}
          title={
          Existence of infinitely many high energy solutions for a fractional Schr\"odinger equation in $\mathbb{R}^{N}$},
          journal={Applied Mathematics Letters},
          volume={61},
          date={2016},
          pages={156-162},
          review={ }
          } 
          
 \bib{raq}{article}{
          author={Lehrer, R.},
          author={Maia, L. A.},
          author={Squassina, M.},
          title={Asymptotically linear fractional Schrodinger equations},
          journal={
          	arXiv:1401.2203 },
          volume={},
          date={},
          pages={},
          review={ }
          }
          
     \bib{edmundo}{article}{
          author={Oliveira, E. C.},
          author={Costa, F. S.},
          author={Jr. Vaz, J.},
          title={The fractional Schr\"odinger equation for delta potentials},
          journal={Journal of Mathematical Physics},
          volume={51},
          date={2010},
          pages={},
          review={ }
          }

     \bib{secchi}{article}{
          author={Secchi, S.},
          title={Ground state solutions for nonlinear fractional Schr\"odinger equations in $\mathbb{R}^{N}$},
          journal={
          Journal of Mathematical Physics },
          volume={54},
          date={2013},
          pages={},
          review={ }
          }

     \bib{secchi2}{article}{
          author={Secchi,S.},
          title={On fractional Schroedinger equations in $\mathbb{R}^N$ without the Ambrosetti-Rabinowitz condition},
          journal={Topological Methods in Nonlinear Analysis},
          volume={47},
          date={2016},
          pages={},
          review={ }
          }
          
     \bib{raf}{article}{
          author={Servadei, R.},
          author={Valdinoci, E.},
          title={Mountain Pass solutions for non-local elliptic operators},
          journal={J. Math. Anal. Appl.},
          volume={389},
          date={2012},
          pages={887-898},
          review={ }
          }
          
\bib{ser}{article}{
     author={Servadei,R.},
     author={Valdinoci, E.},
     title={
     Variational Methods for Non-local Oeprators of Elliptic Type},
     journal={Discrete and Continuous Dynamical Systems},
     volume={33},
     date={2013},
     pages={2105-2137},
     review={ }
     } 
 
 \bib{siegel}{article}{
 	author={Siegel, D.},
 	author={Talvila, E.},
 	title={Pointwise growth estimates of the Riesz potential},
 	journal={Dynamics of Continuous Discrete and Impulsive Systems},
 	volume={5},
 	date={1999},
 	pages={185-194},
 	review={ }
 }

     \bib{yane}{article}{
          author={Souza, M.},
          author={Araújo, Y. L.},
          title={On nonlinear perturbations of a periodic fractional Schr\"odinger
          equation with critical exponential growth},
          journal={Math. Nachr.},
          volume={289},
          date={2016},
          pages={610-625},
          review={ }
          }
         
     \bib{kaimin}{article}{
          author={Teng, K.},
          title={Multiple solutions for a class of fractional Schr\"odinger
          equations in $\mathbb{R}^{N}$},
          journal={Nonlinear Analysis: Real World Applications},
          volume={},
          date={2014},
          pages={},
          review={ }
          }

     \bib{ten}{article}{
          author={Teng, K.},
author={He, X.}
          title={Ground state solutions for fractional Schr\"odinger equations with critical Sobolev exponent},
          journal={Commun. Pure Appl. Anal.},
          volume={15},
          date={2016},
          pages={991-1008},
          review={ }
          }
     
     \bib{you}{article}{
          author={Wan, Y.},
author={Wang, Z.}
          title={
        Bound state for fractional Schr\"odinger equation with saturable nonlinearity},
          journal={Applied Mathematics and Computation},
          volume={273},
          date={2016},
          pages={
          735-740},
          review={ }
          } 
          
     \bib{qingxuan}{article}{
          author={Wang, Q.},
author={Zhao, D.}
author={Wang, K.}
          title={Existence of solutions to nonlinear fractional schr\"odinger equations with singular potentials},
          journal={Applied Mathematics Letters},
          volume={218},
          date={2016},
          pages={1-19},
          review={ }
          } 

\bib{W}{article}{
     author={Willem, M.},
     title={Minimax Theorems},
     journal={Birkhauser},
     date={1986}}
     
     \bib{jiafa}{article}{
          author={Xu, J.},
author={Wei, Z.},
author={Dong, W.},
          title={Existence of weak solutions for a fractional Schr\"odinger equation },
          journal={Communications in Nonlinear Science and Numerical Simulation},
          volume={22},
          date={2015},
          pages={1215-1222},
          review={ }
          }  

     \bib{yang2}{article}{
          author={Yang, L.},
          author={Liu, Z.},
          title={Multiplicity and concentration of solutions for fractional Schr\"odinger equation with sublinear perturbation and steep potential well},
          journal={Computers and Mathematics with Applications},
          volume={},
          date={2016},
          pages={},
          review={ }
          }

     \bib{zhang2}{article}{
          author={Zhang, W.},
          author={Tang, X.},
          author={Zhang, J.},
          title={Infinitely many radial and non-radial solutions for a
          fractional Schr\"odinger equation},
          journal={Computers and Mathematics with Applications},
          volume={},
          date={2015},
          pages={},
          review={ }
       }
       
     \bib{hui}{article}{
          author={Zhang, H.},
author={Xu, J.}
author={Zhang, F.}
          title={
       Existence and multiplicity of solutions for superlinear fractional Schrödinger equations in $\mathbb{R}^{N}$.},
          journal={Journal of Mathematical Physics},
          volume={56},
          date={2015},
          pages={},
          review={ }
          }

     \bib{noncite}{article}{
          author={Zhang, X.},
author={Zhang, B.}
author={Repovs, D.}
          title={Existence and symmetry of solutions for critical fractiona Schr\"odinger equations with bounded potentials},
          journal={Nonlinear Analysis},
          volume={142},
          date={2016},
          pages={48-68},
          review={ }
          }
       
 \end{biblist}
 \end{bibdiv}

\end{document}